\documentclass[leqno,12pt]{amsart}
\usepackage[left=2.1cm,right=2.1cm,top=2.9cm,bottom=2.9cm]{geometry}
\usepackage{booktabs}
\usepackage[T1]{fontenc}
\usepackage[utf8]{inputenc}
\usepackage{lmodern}
\usepackage[spanish,english]{babel} 
\usepackage{amsmath, amssymb, amsfonts, amsthm,mathtools}
\usepackage{mathrsfs}
\usepackage{enumitem}
\usepackage{cite}
\usepackage{graphicx}
\usepackage{pdflscape}
\usepackage[table]{xcolor}
\usepackage[colorlinks=true]{hyperref}
\hypersetup{
  urlcolor=blue,
  linkcolor=blue,
  citecolor=red,
  anchorcolor=blue}

\newtheorem{theorem}{Theorem}[section]

\newtheorem{proposition}[theorem]{Proposition}

\theoremstyle{definition}
\newtheorem{definition}[theorem]{Definition}

\theoremstyle{remark}
\newtheorem{remark}{Remark}
\numberwithin{equation}{section}

\begin{document}
\title[Smoothing effects for fractional fast diffusion equations]{Fractional very fast diffusion equations in Lebesgue spaces: uniqueness and smoothing effects}
\author[M.-E.-M. Boudaoud, A. de Pablo \& F. Quirós]{Mohammed-El-Mahdi Boudaoud, Arturo de Pablo and Fernando Quir\'os}
\address{M.-E.-M. Boudaoud.
         Departamento  de Matem\'{a}ticas, Universidad Aut\'{o}noma de Madrid, 28049-Madrid, Spain.} 
\email{mohammed-el-mahdi.boudaoud@uam.es}

\address{A. De Pablo.
        Departamento de Matem\'{a}ticas, Universidad Carlos III de Madrid, 28911-Legan\'{e}s, Spain \& Instituto de Ciencias Matem\'aticas ICMAT (CSIC-UAM-UC3M-UCM), 28049-Madrid, Spain.}
\email{arturop@math.uc3m.es}

\address{F. Quir\'os.
        Departamento de Matem\'{a}ticas, Universidad Aut\'onoma de Madrid,
        \& Instituto de Ciencias Matem\'{a}ticas ICMAT (CSIC-UAM-UCM-UC3M),
        28049-Madrid, Spain.}
\email{fernando.quiros@uam.es}
\urladdr{https://matematicas.uam.es/~fernando.quiros}

\date{}

% ===============================

\begin{abstract}
We investigate forward and backward smoothing effects in Lebesgue spaces $L^p$ and $\mathcal{M}^p:=L^{p,\infty}$ for the Cauchy problem associated to the nonlinear and nonlocal fractional diffusion equation $\partial_t u+(-\Delta)^{\frac\sigma2}|u|^{m-1}u=0$ in $\mathbb{R}^N$, $0<\sigma<2$, in the very fast range $0<m\le m_c:=\frac{N-\sigma}{N}$.

We prove that very weak solutions have an $\mathcal{M}^p$--$L^\infty$ smoothing effect if $p>p^*:=\frac{N}{\sigma}(1-m)$, and we construct counterexamples showing the failure of any $L^p$--$\mathcal{M}^q$ forward ($q>p$) smoothing effect if $1< p< p^*$, $m<m_c$ or $L^1$--$\mathcal{M}^q$ ($q>1$) if $m=m_c$.

We also prove a backward $\mathcal{M}^p$--$L^1$ smoothing effect whenever $1\le p<p^*$, $m<m_c$, and we construct counterexamples showing that there is no $L^p$--$\mathcal{M}^q$ backward ($q<p$) smoothing effect if $p> p^*$.

Regarding the threshold value $p=p^*$, we prove that all solutions starting in $\mathcal{M}^{p^*}$ become extinct in finite time, and show the failure of any $\mathcal{M}^{p^*}$--$\mathcal{M}^q$ smoothing before extinction for any $q\neq p^*$.

The construction of counterexamples is based on new uniqueness and comparison results for very weak solutions, combined with the existence of self-similar solutions with suitable properties. The same approach yields new counterexamples for both forward and backward smoothing effects also in the local case $\sigma=2$.
\end{abstract}

\subjclass[2020]{
35R11, 35B51, 35A02, 35B65. %Fractional partial differential equations
}
\keywords{Nonlinear fractional diffusion, uniqueness, comparison, very weak solutions, regularization.}

\maketitle

%%%%%%%%%%%%%%%%%%%%%%%%%%%%%%%%%%%%%%%%%%%%%%%%%%%%%%%%%%%%%%%%%%%%%%%%%%%%%%%%%%%%%%%%%%%%%%%%%%%
\section{Introduction}

We give sharp conditions for the existence of 
%$L^p$-$L^q$ 
smoothing effects
%, $1\le p,q\le\infty$, 
for the nonlinear fractional Cauchy problem
\begin{equation}\label{eq:main}
    \begin{cases}
        \partial_t u+(-\Delta)^{\frac\sigma2}|u|^{m-1}u=0,&x\in \mathbb{R}^N,\ t>0,\\
        u(x,0)=u_0(x),&x\in\mathbb{R}^N,
    \end{cases}
\end{equation}
in the so-called \emph{very fast} range, $m\in (0,m_c]$, with  $m_c=\frac{N-\sigma}{N}>0$. Observe that necessarily $N>\sigma$. We assume a fractional order $\sigma\in(0,2)$ for the fractional Laplacian $(-\Delta)^{\frac\sigma2}$. 

We say that there is an $\mathbb{X}$--$\mathbb{Y}$ smoothing effect if \emph{all} solutions with initial datum in~$\mathbb{X}$  belong to $\mathbb{X}\cap\mathbb{Y}$ for almost every positive time. We will focus here initially on $L^p$--$L^q$ smoothing effects, $p\in[1,\infty)$, $p<q\le\infty$. However, it turns out that the spaces that mark the threshold between different behaviours are the weak Lebesgue spaces $L^{p,\infty}(\mathbb{R}^N)$, $p>1$, slightly larger than $L^p(\mathbb{R}^N)$, also known as Marcinkiewicz spaces, denoted here by $\mathcal{M}^p(\mathbb{R}^N)$. Therefore, we will also perform our analysis within them.  In order to simplify the reading and without loss of generality we restrict to nonnegative solutions.

The name very fast diffusion in the title comes from applications in the local case $\sigma=2$, i.e., $\partial_t u+(-\Delta)u^m=0$, which corresponds to slow diffusion if $m>1$, fast diffusion if $m\in (m_c,1)$, and very fast diffusion if $m\in(0,m_c)$. In this local case the critical exponent is $m_c:=\frac{(N-2)_+}{N}$. The first proof of a smoothing effect ($L^1$--$L^\infty$ if $m>m_c$) for local diffusion was given in \cite{veron:se,benilan:se}.  Later, in \cite{smoothbook}, Vázquez proved an $\mathcal{M}^p$--$L^\infty$ smoothing effect if $m<m_c$ under the condition $p>\frac{N}{2}(1-m)$. 

As for the nonlocal problem~\eqref{eq:main}, existence and uniqueness of a weak solution was proved in~\cite{b1}  both if $m>m_c$ and $u_0\in L^1(\mathbb{R}^N)$, or if $m\in (0,m_c]$ and  $u_0\in L^1(\mathbb{R}^N)\cap L^p(\mathbb{R}^N)$, $p>p^*:=\frac{N}{\sigma}(1-m)$. A crucial point of the existence proof is the fact that solutions  immediately enter into $L^\infty(\mathbb{R}^N)$, while staying in the same space as the initial datum: if $m\in(0,m_c]$ and $p>p^*$ there is an ($L^1\cap L^p$)--$L^\infty$ smoothing effect. 

The exponent $p^*$ arises naturally from scaling: problem~\eqref{eq:main} is invariant under the mapping $(x,t,u)\mapsto(\lambda^\beta x,\lambda t,\lambda^\alpha u)$ provided $\alpha(m-1)+\beta\sigma=1$, and for the datum $|x|^{-\frac{N}{p}}$ to be preserved one needs additionally $\alpha=\frac{N\beta}{p}$; this system 
has a unique solution $(\alpha,\beta)$ if and only if $p\neq p^*$. When $p\neq p^*$, this gives rise to self-similar solutions of the form $t^{-\alpha}S(t^{-\beta}|x|)$, which will play a key role in our analysis. The critical case $p=p^*$, where this scaling argument breaks down, corresponds instead to solutions in separated 
variables that become extinct in finite time; see Theorem~\ref{propo:selfsimilar}. Observe also that $p^*=1$ in the borderline case $m=m_c$.

\noindent\textsc{Forward smoothing effects. } Two natural questions arise. 
\begin{quote} 
    \centering
    \emph{Is there an $\mathcal{M}^p$--$L^\infty$ smoothing effect if $p>p^*$, without requiring $u_0\in L^1(\mathbb{R}^N)$?} 
\end{quote}
Notice that a positive answer would imply an $L^p$--$L^\infty$ smoothing effect, since $L^p(\mathbb{R}^N)\subset \mathcal{M}^p(\mathbb{R}^N)$.

Under this weaker assumption on the initial datum, the existence of a solution, in some suitable sense, has to be justified. The existence of very weak solutions with \emph{large} initial data, for all $m>0$, was addressed in \cite{b3}:  if the initial datum belongs to the weighted space $L^1_v(\mathbb{R}^N)=L^1(\mathbb{R}^N;v\,dx)$ with $v$ in a suitable class, there is a very weak solution that moreover stays in $L^1_v(\mathbb{R})$ for any later time. Uniqueness for such solutions when $m>m_c$ was proved in~\cite{jorge}. Uniqueness for $m\in(0,m_c]$ remained an open problem, and will be proved here; see Theorem~\ref{uniquenessthm}. Beyond its intrinsic interest, it has several consequences that will be essential in our analysis of smoothing effects. 

If the initial datum belongs to $L^p(\mathbb{R}^N)$ or $\mathcal{M}^p(\mathbb{R}^N)$, it belongs to $L^1_v(\mathbb{R}^N)$ for some weights $v$ in the admissible class. Hence, there is a very weak solution to the problem. Moreover, we will prove that this solution stays in the same space as the initial datum, $L^p(\mathbb{R}^N)$ or $\mathcal{M}^p(\mathbb{R}^N)$, for later times, thus showing the existence of a very weak $L^p$ or $\mathcal{M}^p$ solution in the sense of Definition~\ref{def:Lp.Mp}; see Theorems~\ref{solution in Lp} and~\ref{solution in Mp}. For the rest of the paper we will work in these classes of solutions. 

A partial answer to our first question was given in~\cite{vazquez:Mp_SE}, where the authors prove the desired $\mathcal{M}^p$--$L^\infty$ smoothing effect, but only for \emph{minimal} very weak solutions in $L^1_v(\mathbb{R}^N)$ (remember that uniqueness was not available). Thus, the possibility of larger solutions staying unbounded was still there. Our uniqueness result rules out this possibility, since there is no other solution different from the minimal one, and we obtain a positive answer to our question; see Theorem~\ref{thm:Forward Mp smoothing effect}.

We turn now our attention to the second question.
\begin{quote}
    \centering
    \emph{Is there any $L^p$ (or $\mathcal{M}^p$)--$L^q$ (or $\mathcal{M}^q$) smoothing effect with $q>p$, if $p\le p^*$?}  
\end{quote}
The answer is no. This is the content of Theorem~\ref{failure of $L^p$--$L^q$}(i). If $p<p^*$  we construct counterexamples in self-similar form. The same approach can also be used to give counterexamples in the local case, that were not available up to now for $q\in(p,p^*]$. The proof when $p= p^*$ follows an idea from \cite{chasvaz} for the local case, based on a comparison principle. Such comparison in our nonlocal setting was not known in the range $m\in(0,m_c]$; it will follow as a corollary of our uniqueness result; see Theorem~\ref{thm:comparison}.

\noindent\textsc{Backward smoothing effects. } Up to now we have only considered the possibility of passing from an $L^p$ (or $\mathcal{M}^p$) space to an $L^q$ (or $\mathcal{M}^q)$ space with $q>p$. These are the so-called \emph{forward} smoothing effects. But we could think about the possibility of entering into integrability spaces with $q<p$ (notice that this question made no sense for solutions living in $L^1(\mathbb{R}^N)\cap L^p(\mathbb{R}^N)$ as the ones considered in~\cite{b1}). Such a phenomenon is named as a \emph{backward} smoothing effect. New questions arise.

\begin{quote}
    \centering
    \emph{Is there any $L^p$ (or $\mathcal{M}^p$)--$L^q$ (or $\mathcal{M}^q$), backward ($q<p$) smoothing effect?}  
\end{quote}

Though there is no forward smoothing effect for initial data in $\mathcal{M}^{p}$ when $p< p^*$, we will show in Theorem~\ref{backthm} that there is a backward $\mathcal{M}^{p}$--$L^1$ smoothing effect: $\mathcal{M}^p$ solutions will become instantaneously integrable. The corresponding analysis for the local case was developed in~\cite{smoothbook}.  One may then wonder whether such backward smoothing effects could happen if $p\ge p^*$.  At this respect we use the self-similar solutions constructed in~\cite{vazquez:Mp_SE} to show that this is not the case; see Theorem~\ref{failure of $L^p$--$L^q$}(ii). A similar approach can be used to give counterexamples for the local case, that were not available up to now.

The following table summarizes our results on the forward and backward 
\(L^p\)–\(\mathcal{M}^q\) and \(\mathcal{M}^p\)–\(L^q\)  smoothing effects. 
\renewcommand{\arraystretch}{2}
\[
\begin{array}{c|c|c|c|c}
 & p<q<\infty & q=\infty & p>q>1 & q=1\\
\hline
1\le p<p^{*}
& \cellcolor{red!10} L^{p}-\mathcal{M}^{q}\ \times
& \cellcolor{red!10} L^{p}-L^{\infty}\ \times
& \cellcolor{green!10} \mathcal{M}^{p}-L^{q}\ \checkmark
& \cellcolor{green!10}\mathcal{M}^{p}-L^{1}\ \checkmark
\\
\hline
p=p^{*}
& \cellcolor{red!10} \mathcal{M}^{p^*}-\mathcal{M}^{q}\ \times
& \cellcolor{red!10} \mathcal{M}^{p^*}-L^{\infty}\ \times
& \cellcolor{red!10} \mathcal{M}^{p^*}-\mathcal{M}^{q}\ \times
& \cellcolor{red!10} \mathcal{M}^{p^*}-L^{1}\ \times
\\
\hline
p>p^{*}
& \cellcolor{green!10} \mathcal{M}^{p}-L^{q}\ \checkmark
& \cellcolor{green!10}\mathcal{M}^{p}-L^{\infty}\ \checkmark
& \cellcolor{red!10} L^{p}-\mathcal{M}^{q}\ \times
& \cellcolor{red!10} L^{p}-L^{1}\ \times
\\
\hline
\end{array}
\]

\smallskip

\centerline{\small $\checkmark$: smoothing effect holds;\quad $\times$: smoothing effect fails.}

\medskip 

\noindent\textsc{The critical exponent $p^*$. } The results above lead to the following question:

\begin{quote}
    \centering
    \emph{What happens for $p=p^*$?}  
\end{quote}

For this critical value we have neither forward nor backward standard smoothing effects. However, we will prove in Theorem~\ref{extinction:Lp and Mp} that $\mathcal{M}^{p^*}$ is an \emph{extinction class}: \emph{all} solutions with initial data in $\mathcal{M}^{p^*}$ become identically zero in finite time. This finite-time extinction phenomenon may be regarded as a delayed smoothing effect.  

If $p\neq p^*$, there are examples of $L^{p}$ solutions that do not become extinct; see Theorem~\ref{extinction:Lp and Mp}. Thus, $\mathcal{M}^{p^*}$ is the only extinction class in our framework. These issues for the local case were analyzed in~\cite{smoothbook}.

\noindent\textsc{The local case. }  When $\sigma=2$, a counterexample for the forward smoothing effect was only known if $m=m_c$ and $p=p^*=1$; see~\cite{chasvaz}. The same construction of Theorem~\ref{failure of $L^p$--$L^q$}(i)-(ii) can be used to construct counterexamples to both the forward and backward smoothing effects in the local case for $m\in(0,m_c)$.

\subsection*{Organization of the paper.} After giving some preliminary definitions and results in Section~\ref{sect:Preliminaries}, we devote Section~\ref{sec:uniqueness,comparison} to the proof of one of our main results, uniqueness of very weak weighted solutions, as well as two comparison results (standard comparison and concentration comparison) that will be essential for the rest of the paper. We move then to  Section~\ref{sec:theory in Lebesgues sp}, where we prove that problem~\eqref{eq:main} is well posed in Lebesgue spaces by showing existence and uniqueness of $L^p$ and $\mathcal{M}^p$ solutions, and obtain, as a corollary of the results of the previous section in combination with previous work from~\cite{vazquez:Mp_SE}, extinction and forward smoothing effects. Section~\ref{sec:positives results} is devoted to the proof of backward smoothing effects. Counterexamples proving the lack of smoothing effects outside the good ranges are constructed in Section~\ref{sec:failure of se}. We end with an appendix in which we prove the continuity of $L^p$ solutions for positive times when $p>p^*$.

%%%%%%%%%%%%%%%%%%%%%%%%%
\section{Preliminaries}
\label{sect:Preliminaries}

In this section we recall the concepts of fractional Laplacian, Marcinkiewicz  spaces, concentration comparison and the different definitions of solution: very weak, $L^p$ and $\mathcal{M}^p$ very weak, and weighted very weak solutions. 

\noindent\emph{Notations. } Throughout, we will frequently view the solution $u$ of \eqref{eq:main} as a map from time into a Lebesgue space. In this setting, $u(t)$ will denote, for almost every $t>0$,  the spatial function given by $u(t)(x)=u(x,t)$  for almost every $x\in\mathbb{R}^N$. Letter $c$ will denote a constant  that may change from line to line. By $g\sim h$ we mean that there are constants $c_1,\,c_2>0$ such that $c_1 h\le g\le c_2 h$. $B_R=B_R(0)$ denotes the ball of radius $R$.

%%%%%%%%%%%%
\subsection*{Fractional Laplacian.} 
The nonlocal operator $(-\Delta)^{\frac\sigma2}$, known as the fractional Laplacian of order $\sigma\in(0,2)$, is well defined in the Schwartz space via the Fourier transform, see \cite{Stein-1970},
\[
\mathcal{F}\big( (-\Delta)^{\frac\sigma2} h \big)(\xi)
= |\xi|^{\sigma}\,\mathcal{F}(h)(\xi).
\]
It also admits the following integral representation for bounded smooth functions
$$
    (-\Delta)^{\frac\sigma2}h(x)=C_{N,\sigma}\text{P.V.}\int_{\mathbb{R}^N}\frac{h(x)-h(y)}{|x-y|^{N+\sigma}}\,dy,
$$
where $C_{N,\sigma}$ is a normalization constant, and P.V. stands for principal value. 

A weak definition of the fractional Laplacian can be given for functions $h$ belonging to $\dot{H}^{\frac\sigma2}(\mathbb{R}^N)$, the homogeneous Sobolev space equipped with the seminorm 
\begin{equation*}
[h]_{\dot{H}^{\frac\sigma2}}=\left(\frac{C_{N,\sigma}}{2}\int_{\mathbb{R}^N}\int_{\mathbb{R}^N}\frac{(h(x)-h(y))^2}{|x-y|^{N+\sigma}}\,dydx\right)^{1/2}.
\end{equation*}
A fundamental inequality associated with this operator is the Hardy-Littlewood-Sobolev estimate (see~\cite{Lieb}):
\[
    \|h\|_{\frac{Np}{N-p\sigma}}\leq c\|(-\Delta)^{\frac\sigma2}h\|_p, \quad h\in\dot{H}^{\frac\sigma2}(\mathbb{R}^N).
\]

Rather than working strictly within the Schwartz class or the Sobolev space, we consider the weighted space 
$\mathcal{L}_{\sigma}=L^1_\rho(\mathbb{R}^N)$, with $\rho=(1+|x|^2)^{-\frac{(N+\sigma)}2}$, 
%$\rho=v_{N+\sigma}$, where the family of weights  is $v_\alpha(x)=(1+|x|^2)^{-\frac{\alpha}2}$, 
where, given a weight $v$, we define
$$
L^1_v(\mathbb{R}^N):=\left\{h:\mathbb{R}^N\rightarrow\mathbb{R} \;\text{such that}\;\int_{\mathbb{R}^N}|h(x)|v(x)\,dx<\infty\right\}.
$$ 
Although the fractional Laplacian of a smooth compactly supported function does not have compact support, its size is controlled by $\rho$. Therefore the operator $(-\Delta)^{\frac\sigma2}$ is well defined in that weighted space in the distributional sense, 
$$
\int_{\mathbb{R}^N}(-\Delta)^{\frac\sigma2}hg=\int_{\mathbb{R}^N}h(-\Delta)^{\frac\sigma2}g,\qquad h\in \mathcal{L}_{\sigma},\quad g\in C^\infty_{\textup{c}}(\mathbb{R}^N).
$$
The operator $(-\Delta)^{-\frac\sigma2}$, known as the Riesz potential, is defined for $\psi\in L^p(\mathbb{R}^N)$ by
\begin{equation}\label{riesz}
    (-\Delta)^{-\frac\sigma2}\psi(x)=C_{N,-\sigma}\int_{\mathbb{R}^N}\frac{\psi(y)}{|x-y|^{N-\sigma}}\,dy,
\end{equation}
if $0<\sigma<\frac Np$. For $\frac Np<\sigma<N$ the potential is understood in the distributional sense.

For more information on this topic, see, for instance, \cite{DiNezza-Palatucci-Valdinoci-2012,Landkof-1972,Stein-1970}.

%%%%%%%%
\subsection*{Rearrangements and mass concentration.}

Let $f$ be a real measurable function on $\mathbb{R}^N$. The \emph{distribution function} $\mu_f$ of $f$ is defined by
\[ 
    \mu_f(k)=|\{x\in\mathbb{R}^N:\ |f(x)|>k\}|, \qquad k\ge 0,
\]
where the right-hand side denotes set measure, and the \emph{decreasing rearrangement} of $f$ by
\[
    f^*(s)=\sup\{k\ge 0:\ \mu_f(k)>s\}, \qquad s>0.
\]
From this definition it turns out that
$\mu_{f^*}=\mu_f$ ($f$ and $f^*$ are equi-distributed) and $f^*$ is exactly
the generalized inverse of $\mu_f$. Moreover,
$$
\|f\|_p^p=\|f^*\|_p^p=p\int_0^\infty\lambda^{p-1}\mu_f(\lambda)\,d\lambda,
$$
for every $1\le p<\infty$. Hardy-Littlewood's inequality is also useful,
\begin{equation}\label{H-L}
\int_{\mathbb{R}^N}|f(x)g(x)|\,dx\le \int_0^\infty f^*(s)g^*(s)\,ds.
\end{equation}

Furthermore, if $\omega_N$ is the measure of the unit ball in $\mathbb{R}^N$, we define the function
\[
 f^\#(x)=f^*(\omega_N |x|^N),\quad x\in \mathbb{R}^N,
\]
that will be called the \emph{spherical decreasing rearrangement} of $f$.
We then say that $f$ is rearranged if $f=f^\#$.

For the comparison argument needed later in Section~\ref{sec:positives results}, the concept of mass concentration is fundamental. Specifically, we say that a function $h$ is less concentrated than $g$, denoted $ h \prec g$, if both $h$ and $g$ are radially symmetric, belong to $L^1_{\mathrm{loc}}(\mathbb{R}^N)$, and satisfy, for all $R>0$
\[
    \int_{B_R}h(x)\,dx\leq \int_{B_R}g(x)\,dx.
\]
If both $h$ and $g$ are rearranged, it can be easily checked that  $h \prec g$ implies $\|h\|_p\le \|g\|_p$ for every $1\le p\le\infty$. 

The above concepts can be defined analogously in domains $\Omega\subset\mathbb{R}^N$. The same can be done in the next paragraph. A good reference for these topics is \cite{vazquez-sym}.
%%%%%%%%
\subsection*{Marcinkiewicz spaces.}
For $p\in(1,\infty)$, the space $\mathcal{M}^p(\mathbb{R}^N)$, (also known as the weak $L^p$ space, denoted as $L^{p,\infty}(\mathbb{R}^N)$) consists of all functions $f\in L^1_{\textup{loc}} (\mathbb{R}^N)$ such that
\[\int_K |f(x)|\,dx\leq C |K|^{\frac{p-1}{p}},\]
for some constant $C$ and all measurable subsets $K\subset \mathbb{R}^N$ with finite measure. The smallest such constant $C$ defines a quasi-norm; see for instance \cite{Grafakos-2014}. An equivalent quasi-norm that will be helpful for us is
\begin{equation}\label{weaknorm}
\|f\|_{\mathcal{M}^p}=\sup_{\lambda>0}\lambda(\mu_f(\lambda))^{\frac1p}=\sup_{s>0}s^{\frac1p}f^*(s).
\end{equation}
It is easy to check the strict inclusion $
L^p(\mathbb{R}^N)\varsubsetneq \mathcal{M}^p(\mathbb{R}^N),
$
for every $1< p<\infty$. The typical example is the pure power $U_0(x)=|x|^{-\frac{N}{p}}$, which satisfies $U_0\in
\mathcal{M}^p(\mathbb{R}^N)\setminus L^p(\mathbb{R}^N)$. This is the most concentrated function in $\mathcal{M}^p$, useful in comparison arguments.

%%%%%%%%%%%%%%
\subsection*{Very weak solutions}

In the case of the classical Laplacian, the notion of very weak solution only requires
$u_0,\, u(t)\in L^1_{\mathrm{loc}}(\mathbb{R}^N)$; see \cite{herreropierre}. However, when dealing with problem~\eqref{eq:main}, the nonlocal character of $(-\Delta)^{\frac\sigma2}$ makes it necessary to introduce the weighted space $\mathcal{L}_{\sigma}$. 
\begin{definition}\label{def1}
A \emph{very weak solution} to problem~\eqref{eq:main} with initial datum $u_0\in L^1_{\textup{loc}}(\mathbb{R}^N)$ is a function $u$ such that:

\noindent\textup{(i)} $u\in C([0,\infty);L^1_{\textup{loc}}({\mathbb{R}^N}))$ and $u^m\in L^1_{\textup{loc}}((0,\infty);\mathcal{L}_{\sigma})$;

\noindent\textup{(ii)} for all $\phi\in C^{\infty}_{\textup{c}}(\mathbb{R}^N\times (0,\infty))$ we have the identity
\begin{equation}\label{vwe}
    \int_{0}^{\infty}\int_{\mathbb{R}^N}u\partial_t\phi \,dxdt-\int_{0}^{\infty}\int_{\mathbb{R}^N}u^m(-\Delta)^{\frac\sigma2}\phi \,dxdt=0;
\end{equation}

\noindent\textup{(iii)} $u(t)\to u_0$ in $L^1_{\text{loc}}(\mathbb{R}^N)$.
\end{definition}

\subsection*{Very weak \texorpdfstring{$L^p$ and $\mathcal{M}^p$}{Lp and Mp} solutions} 
We are interested in solutions that live in $L^p$ ($1\le p\le\infty$) or $\mathcal{M}^p$ ($1< p<\infty$) for almost every $t>0$. This suggests the following definition.
\begin{definition}\label{def:Lp.Mp}
    \textup{(i)} A \emph{very weak $L^p$ solution} (or simply $L^p$ solution) to problem~\eqref{eq:main} with initial datum $u_0\in L^p(\mathbb{R}^N)$  is a very weak solution $u$ to the problem with that initial datum such that $u\in L^1_{\rm loc}((0,\infty);L^p (\mathbb{R}^N))$.

\noindent\textup{(ii)} A \emph{very weak $\mathcal{M}^p$ solution} (or simply $\mathcal{M}^p$ solution) to problem~\eqref{eq:main} with initial datum $u_0\in \mathcal{M}^p(\mathbb{R}^N)$   is a very weak solution $u$ to the problem with that initial datum such that $u\in L^1_{\rm loc}((0,\infty);\mathcal{M}^p (\mathbb{R}^N))$.
\end{definition}
We will prove existence and uniqueness within these classes. As a tool to this aim we will consider \emph{weighted very weak solutions}, a concept already used in~\cite{b3} in the context of problem~\eqref{eq:main} that we recall next.

\subsection*{Weighted very weak solutions}

The existence of very weak solutions is not guaranteed for initial data that are just in  $L^1_{\rm loc}$.
However, it is available under further integrability assumptions, namely $u_0\in L^1_v(\mathbb{R}^N)$ with $v$ in the class of admissible weights
\[
 \Theta:=\left\{v(x)=v_\alpha(x)=(1+|x|^2)^{-\frac\alpha2}\ \text{for some}\ \alpha\in\left(N-\tfrac{\sigma}{1-m},N+\tfrac{\sigma}{m}\right)\right\}.
\]
Indeed, as proved in~\cite{b3}, under this assumption on the initial datum there is a weak solution \emph{that stays} in $L^1_v(\mathbb{R}^N)$ for later times. This motivates the following definition.
\begin{definition}\label{vww:definition}
    A \emph{weighted very weak solution} (\emph{weighted solution} in the sequel) to problem~\eqref{eq:main}, with weight $v\in\Theta$, is a very weak solution $u$ to the problem  with initial datum $u_0 \in L^1_v(\mathbb{R}^N)$  that satisfies further
    $u\in L^1_{\textup{loc}}((0,\infty);L^1_{v}({\mathbb{R}^N}))$. 
\end{definition}

Weighted solutions satisfy an important contraction property. If $u,\hat{u}$ are two weighted solutions with the same weight $v\in \Theta$ (without loss of generality we can change the weight of one of them, just taking the minimum of the exponents $\alpha$), then, for all $m\in(0,1)$,
\begin{equation}\label{weight+}
    \left(\int_{\mathbb{R}^N}(u-\hat{u})_+(x,t)v(x/R)\right)^{1-m}\leq \left(\int_{\mathbb{R}^N}(u-\hat{u})_+(x,\tau)v(x/R)\right)^{1-m}+\frac{C(t-\tau)}{R^{\sigma-N(1-m)}},
\end{equation}
for $0\leq\tau\leq t<\infty$; see~\cite[Theorem 3.5]{jorge}.
If $m>m_c$ inequality~\eqref{weight+} gives comparison and uniqueness just by letting $R$ go to infinity. The extension of such uniqueness and comparison results to the range $m\in(0,m_c]$ is one of the main features of this paper. 

On the other hand, the (strict) inclusion $\mathcal{M}^p(\mathbb{R}^N)\subset L^1_v(\mathbb{R}^N)$ holds
for some weights $v$ in the admissible class, depending on $p$. Clearly any constant belongs to $L^1_v(\mathbb{R}^N)\setminus\mathcal{M}^p(\mathbb{R}^N)$.  The formal argument of the inclusion is as follows: if $f\in \mathcal{M}^p(\mathbb{R}^N)$, then by~\eqref{weaknorm} its decreasing rearrangement satisfies $f^*(s)\le \|f\|_{\mathcal{M}^p}s^{-\frac1p}$. Take a weight $v=v_\alpha$. Then, analogously $v^*(s)\le c s^{-\frac\alpha N}$. In fact $v^*(s)=(1+(s/\omega_N)^{\frac2N})^{-\frac\alpha2}\sim  s^{-\frac\alpha N}$ at infinity. Now, applying Hardy-Littlewood inequality~\eqref{H-L},
$$
\int_{\mathbb{R}^N}|f(x)v(x)|\,dx\le \int_0^\infty f^*(s)v^*(s)\,ds\le c_1+c_2\int_1^\infty s^{-\frac1p}s^{-\frac\alpha N}\,ds<\infty,
$$
provided $\alpha>\frac{N(p-1)}p$. Thus, if $p<p^*$ any decay in the class $\Theta$ is allowed, while for $p\ge p^*$ we must restrict to the interval $\alpha\in(N-\frac{N}{p},N+\frac \sigma m)$. 

In summary, if the initial datum belongs to $L^p$ or $\mathcal{M}^p$, it also belongs to a weighted space~$L^1_v(\mathbb{R}^N)$ for certain weights. Hence, it gives rise to a weighted solution. We will show that this solution remains in the same space, $L^p$ or $\mathcal{M}^p$, for later times. Hence,  it is an $L^p$, respectively $\mathcal{M}^p$, solution. In the case of $L^p$ solutions, the $L^p$ norm does not increase with time.

%%%%%%%%%%%%%%

%%%%%%%%%%%%%%%%%%%%%%%%%%%%%%%%%%%%%%%%%%%%%%%%%
\section{\texorpdfstring
{Uniqueness and comparison for weighted very weak solutions in the very singular regime $m\le m_c$.}{Uniqueness and comparison for weighted very weak solutions in the very singular regime m <= mc.}}\label{sec:uniqueness,comparison}

In this section we begin by establishing uniqueness of weighted (very weak) solutions, following the approach of Herrero and Pierre \cite{herreropierre} in the local case. We then prove a comparison result using an approximation argument. Finally, we extend the concentration comparison result of \cite{volzone} to  Marcinkiewicz spaces. 

Our proof of uniqueness relies on the following technical result. 

\begin{proposition}\label{subharmonic}
 Let $u$ and $\hat{u}$ be two weighted solutions of the problem~\eqref{eq:main} with the same initial datum. Then the function 
\begin{equation}\label{w}
    w(x,t)=\int_{0}^{t}|u^{m}-\hat{u}^{m}|(x,s)\,ds
\end{equation} 
is $\frac\sigma2$--subharmonic in the very weak sense.
\end{proposition}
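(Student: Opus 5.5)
We adapt the Herrero--Pierre argument to the nonlocal setting. The aim is to show that, for each fixed $t>0$,
\[
\int_{\mathbb{R}^N} w(x,t)\,(-\Delta)^{\frac\sigma2}\varphi(x)\,dx\le 0\qquad\text{for all }0\le\varphi\in C^\infty_{\rm c}(\mathbb{R}^N).
\]
This is what $(-\Delta)^{\frac\sigma2}w\le 0$ means in the very weak sense. It makes sense because $w(t)\in\mathcal{L}_\sigma$, which follows from $u^m,\hat u^m\in L^1_{\rm loc}((0,\infty);\mathcal{L}_\sigma)$ together with integrability near $s=0$, itself a consequence of the weighted bounds.

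\textbf{Integrated identity.} Subtracting the very weak formulations for $u$ and $\hat u$ and using test functions $\phi(x,s)=\varphi(x)\eta(s)$, with $\eta$ approximating $\mathbf 1_{[0,t]}$, we obtain
\[
\int_{\mathbb{R}^N}(u-\hat u)(x,t)\varphi(x)\,dx=-\int_0^t\!\!\int_{\mathbb{R}^N}(u^m-\hat u^m)(-\Delta)^{\frac\sigma2}\varphi\,dx\,ds .
\]
The initial terms cancel because both solutions share the datum and are continuous in $L^1_{\rm loc}$. Passing to the limit in $\eta$ is justified by $u\in C([0,\infty);L^1_{\rm loc})$ and the $\mathcal{L}_\sigma$ bound, since $|(-\Delta)^{\frac\sigma2}\varphi|\le c\rho$. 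This identity alone is too weak, because the sign of $u^m-\hat u^m$ varies.

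\textbf{Kato-type step.} Following Herrero--Pierre, we need the identity with the absolute value. The natural route is a Kato inequality for the difference $z=u-\hat u$:
\[
\partial_t|z|+(-\Delta)^{\frac\sigma2}|u^m-\hat u^m|\le 0
\]
in the very weak sense. We derive it by regularizing $z$ (mollifying in space and time, or using the Steklov average in time plus mollification in space). We then test with $\phi=p_\varepsilon(u^m-\hat u^m)\varphi\eta$, where $p_\varepsilon$ is a smooth approximation of the sign function. Two facts drive the argument: $\operatorname{sign}(u-\hat u)=\operatorname{sign}(u^m-\hat u^m)$, since $r\mapsto r^m$ is monotone; and the nonlocal convexity inequality $p(a)\,(-\Delta)^{\frac\sigma2}a\ge(-\Delta)^{\frac\sigma2}j(a)$ for $j'=p$ convex, which follows from the integral representation because $j(a(x))-j(a(y))\le p(a(x))(a(x)-a(y))$. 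Integrating this inequality in time from $0$ to $t$ with $|z(0)|=0$ gives
\[
\int_{\mathbb{R}^N}|u-\hat u|(x,t)\varphi\,dx+\int_{\mathbb{R}^N}w(x,t)(-\Delta)^{\frac\sigma2}\varphi\,dx\le 0 .
\]
Dropping the first, nonnegative term yields the claim.

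\textbf{Main obstacle.} The main obstacle is justifying the Kato inequality rigorously for very weak solutions with only $L^1_{\rm loc}$ and weighted integrability. The difficulty is that $u^m$ is not known to lie in $\dot H^{\sigma/2}$, so $p_\varepsilon(u^m-\hat u^m)$ is not an admissible test function. To avoid this we will work in the dual (Herrero--Pierre) manner. First apply the identity above with $\varphi$ replaced by its space mollification. Then use that $w$ is nondecreasing in $t$ and satisfies $\int z(t)\varphi=-\int W(t)(-\Delta)^{\frac\sigma2}\varphi$, where $W=\int_0^t(u^m-\hat u^m)$. This means $(-\Delta)^{\frac\sigma2}W(t)=-z(t)$ distributionally. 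Since $\partial_t W$ and $z$ have the same sign pointwise, we apply Kato's inequality for the fractional Laplacian acting on the time-regularized $W$; for fixed time $W$ lies in $\mathcal{L}_\sigma$, where a distributional Kato inequality is available. The tail contributions are controlled by the weighted bounds and the decay $\rho$ of $(-\Delta)^{\frac\sigma2}\varphi$.
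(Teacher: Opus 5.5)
\textbf{There is a genuine gap.} Your skeleton is right: subtract the weak formulations, obtain a Kato inequality for the pair $|u-\hat u|$, $|u^m-\hat u^m|$, integrate in time, and drop $\int|u-\hat u|(t)\varphi\ge0$. But the step that carries the whole proof is not established, and the route you sketch for it fails. That step is the very weak inequality $\partial_t|u-\hat u|+(-\Delta)^{\frac\sigma2}|u^m-\hat u^m|\le 0$.

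Kato's inequality can only be used where $\operatorname{sign}(u-\hat u)=\operatorname{sign}(u^m-\hat u^m)$ holds for the objects actually being differentiated. Applying the distributional Kato inequality to $W(t)$ at a fixed time gives information on $|W(t)|=\bigl|\int_0^t(u^m-\hat u^m)\bigr|$, not on $w(t)=\int_0^t|u^m-\hat u^m|$.

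To reach $w$ you must work with time increments, e.g. $(-\Delta)^{\frac\sigma2}\frac{W(t+h)-W(t)}{h}=-\frac{z(t+h)-z(t)}{h}$. Kato then produces the factor $\operatorname{sign}\bigl(\frac1h\int_t^{t+h}(u^m-\hat u^m)\bigr)$. In general this agrees neither with $\operatorname{sign} z(t)$ nor with $\operatorname{sign} z(t+h)$, since the average can take either sign wherever $z$ changes sign in $(t,t+h)$.

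Replacing it by $\operatorname{sign} z(t+h)$ is the choice that would give the telescoping bound $-\frac{|z(t+h)|-|z(t)|}{h}$. But it leaves an error term of the form (sign mismatch)$\times\frac{z(t+h)-z(t)}{h}$. That difference quotient has no bound as $h\to0$, because $\partial_t z$ exists only as a distribution. So the pointwise fact ``$\partial_t W$ and $z$ have the same sign'' is lost exactly at the regularization you need. Space mollification has the same defect, since $\operatorname{sign}(z*\rho)\neq\operatorname{sign}((u^m-\hat u^m)*\rho)$ in general. Nothing in your last paragraph addresses this: monotonicity of $w$ in $t$, mollifying $\varphi$, and tail control all leave the sign mismatch untouched.

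\textbf{How the paper handles this step.} It keeps the sign coupling by approximating $u$ and $\hat u$ \emph{separately} by smooth $u_\varepsilon,\hat u_\varepsilon$, with $\int_0^b\!\int|u-u_\varepsilon|v<\varepsilon$ and likewise for $\hat u$. Then $\operatorname{sign}(u_\varepsilon-\hat u_\varepsilon)=\operatorname{sign}(u_\varepsilon^m-\hat u_\varepsilon^m)$ holds exactly.

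It inserts $\phi=\Phi_h'(u_\varepsilon-\hat u_\varepsilon)\tilde\phi$ into the subtracted weak formulation, integrates by parts in time, and applies the pointwise Kato inequality to the smooth function $u_\varepsilon^m-\hat u_\varepsilon^m$.

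The cost of replacing $u^m$ by $u_\varepsilon^m$ is controlled through the weight. Using $|a^m-b^m|\le|a-b|^m$ and H\"older, it is at most
$$c\Bigl(\iint|u-u_\varepsilon|v\Bigr)^m\Bigl(\iint|(-\Delta)^{\frac\sigma2}\phi|^{\frac1{1-m}}v^{-\frac m{1-m}}\Bigr)^{1-m},$$
and the second factor is finite precisely because $\alpha<N+\frac\sigma m$. This is where the weighted class really enters the argument, beyond the tail and integrability bounds you use it for.

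The outcome is $\iint|u^m-\hat u^m|(-\Delta)^{\frac\sigma2}\tilde\phi\le\iint|u-\hat u|\partial_t\tilde\phi$. Your final step then goes through with $\tilde\phi=F_{k,\tau}(t)\psi(x)$ and $\tau\to0$.

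If you adopt this route, note that the constants in those error bounds depend on $\partial_t\phi$ and $(-\Delta)^{\frac\sigma2}\phi$. Since $\phi$ now depends on $\varepsilon$ and $h$, the order of the limits needs care; the paper's own argument does not address this explicitly.
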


\begin{proof}
We begin by proving that $(-\Delta)^{\frac\sigma2}w\le0$ when $u$ and $\hat{u}$ are bounded smooth solutions. For general solutions the computation is only formal, and will be justified in the very weak sense later.

We multiply the expression resulting from the subtraction of the equations satisfied by $u$ and~$\hat u$ by $\operatorname{sign}(u-\hat{u})$ to obtain
$$
 \operatorname{sign}(u-\hat{u})\partial_t (u-\hat{u})+\operatorname{sign}(u-\hat{u})(-\Delta)^{\frac\sigma2}(u^{m}-\hat{u}^{m})=0.
$$
Using Kato's inequality (see \cite{cordobaforuniqueness}), we deduce that
\begin{equation}\label{eq:Kato}
    \partial_t |u-\hat{u}|\leq-(-\Delta)^{\frac\sigma2}|u^{m}-\hat{u}^{m}|.
\end{equation}
Integrating in time we obtain    
$$
        |u-\hat{u}|(x,t)\leq -\int_{0}^{t}(-\Delta)^{\frac\sigma2}|u^{m}-\hat{u}^{m}|(x,s)\,ds=-(-\Delta)^{\frac\sigma2}w(x,t).
$$
Thus, $w$ is subharmonic. 

For solutions satisfying the equation in a very weak sense we must show that 
\[
    \int_{\mathbb{R}^N}w(-\Delta)^{\frac\sigma2}\psi\leq 0\quad\textup{for all } 0\leq\psi\in C^{\infty}_{\textup{c}}(\mathbb{R}^N).
\]
By Definition~\ref{def1}, for all $\phi\in C^{\infty}_c(\mathbb{R}^N\times (0,\infty))$ we have
\begin{equation}\label{weak_diff}
    \int_{0}^{\infty}\int_{\mathbb{R}^N}(u-\hat{u})\partial_t\phi \,dxdt-\int_{0}^{\infty}\int_{\mathbb{R}^N}(u^m-\hat{u}^m)(-\Delta)^{\frac\sigma2}\phi \,dxdt=0.
\end{equation}
We will choose the test function $\phi$ in terms of $\psi$. Let $v=v_\alpha\in \Theta$ be the common weight for the two weighted solutions $u,\hat{u}$. Let $b>0$ such that $\phi\equiv 0$ for $t>b$. Observe that once we fix $\phi$ we clearly have $|\partial_t\phi|\le Kv$ in $\mathbb{R}^N\times[0,b]$ for some constant $K>0$. We approximate $u,\hat{u}$ by smooth functions $u_\varepsilon,\hat{u}_\varepsilon$ satisfying, 
\begin{equation}\label{5}
 \int_{0}^{b}\int_{\mathbb{R}^N}|\hat{u}-\hat{u}_{\varepsilon}|v\,dxdt<\varepsilon,\quad \int_{0}^{b}\int_{\mathbb{R}^N}|u-u_{\varepsilon}|v\,dxdt<\varepsilon.
\end{equation}
We begin by decomposing the first integral in~\eqref{weak_diff},
\begin{align*}
\displaystyle \int_{0}^{b}\int_{\mathbb{R}^N}(u-\hat{u})\partial_t\phi&\displaystyle= \int_{0}^{b}\int_{\mathbb{R}^N}(u-u_{\varepsilon})\partial_t\phi+ \int_{0}^{b}\int_{\mathbb{R}^N}(u_{\varepsilon}-\hat{u}_{\varepsilon})\partial_t\phi+ \int_{0}^{b}\int_{\mathbb{R}^N}(\hat{u}_{\varepsilon}-\hat{u})\partial_t\phi \\[3mm]
&\displaystyle\leq \int_{0}^{b}\int_{\mathbb{R}^N}|u-u_{\varepsilon}||\partial_t\phi|+ \int_{0}^{b}\int_{\mathbb{R}^N}|\hat{u}-\hat{u}_{\varepsilon}||\partial_t\phi|+ \int_{0}^{b}\int_{\mathbb{R}^N}(u_{\varepsilon}-\hat{u}_{\varepsilon})\partial_t\phi.
\end{align*}
Thus, 
\begin{equation}\label{6}
    \int_{0}^{b}\int_{\mathbb{R}^N}(u-\hat{u})\partial_t\phi\leq 2K\varepsilon+ \int_{0}^{b}\int_{\mathbb{R}^N}(u_{\varepsilon}-\hat{u}_{\varepsilon})\partial_t\phi.
    \end{equation}
Similarly, for the nonlocal term 
\begin{align*}
    \int_{0}^{b}\int_{\mathbb{R}^N}(u^m-\hat{u}^m)(-\Delta)^{\frac\sigma2}\phi&= \int_{0}^{b}\int_{\mathbb{R}^N}(u^m-u_{\varepsilon}^m)(-\Delta)^{\frac\sigma2}\phi\\
    &\quad+ \int_{0}^{b}\int_{\mathbb{R}^N}(u_{\varepsilon}^m-\hat{u}_{\varepsilon}^m)(-\Delta)^{\frac\sigma2}\phi+ \int_{0}^{b}\int_{\mathbb{R}^N}(\hat{u}_{\varepsilon}^m-\hat{u}^m)(-\Delta)^{\frac\sigma2}\phi.
\end{align*}
Therefore,
\begin{equation}\label{eq:decomposition}
\begin{aligned}
    \int_{0}^{b}\int_{\mathbb{R}^N}(u_{\varepsilon}^m-\hat{u}_{\varepsilon}^m)(-\Delta)^{\frac\sigma2}\phi&\leq
    \int_{0}^{b}\int_{\mathbb{R}^N}|(u^m-u_{\varepsilon}^m)(-\Delta)^{\frac\sigma2}\phi|
    +\int_{0}^{b}\int_{\mathbb{R}^N}|(\hat{u}_{\varepsilon}^m-\hat{u}^m)(-\Delta)^{\frac\sigma2}\phi|\\
    &\quad +\int_{0}^{b}\int_{\mathbb{R}^N}(u^m-\hat{u}^m)(-\Delta)^{\frac\sigma2}\phi.
\end{aligned}
\end{equation} 
 Using the concavity of the power $m<1$ and then Hölder's inequality, we estimate
\begin{align*}
    \int_{0}^{b}\int_{\mathbb{R}^N}|(u^m-u_{\varepsilon}^m)(-\Delta)^{\frac\sigma2}\phi|&\le c \int_{0}^{b}\int_{\mathbb{R}^N}\frac{v^m}{v^m}|u-u_{\varepsilon}|^m|(-\Delta)^{\frac\sigma2}\phi|
 \\
 &\le c
 \left( \int_{0}^{b}\int_{\mathbb{R}^N}|u-u_{\varepsilon}|v\right)^m
 \left( \int_{0}^{b}\int_{\mathbb{R}^N}\frac{|(- \Delta)^{\frac\sigma2}\phi|^{\frac{1}{1-m}}}{v^{\frac{m}{1-m}}}\right)^{1-m}.
\end{align*}
For every $\phi$ smooth and compactly supported function,  $(-\Delta)^{\frac\sigma2}\phi$ has a minimal decay at infinity, $|x|^{-N-\sigma}$. Then, we split the last integral as
\begin{equation}\label{eq:bound.phi.v}
    \begin{aligned}
        \int_{\mathbb{R}^N} \frac{|(-\Delta)^{\frac\sigma2}\phi|^{\frac{1}{1-m}}}{v^{\frac{m}{1-m}}}&=
        \int_{|x|< 1} \frac{|(-\Delta)^{\frac\sigma2}\phi|^{\frac{1}{1-m}}}{v^{\frac{m}{1-m}}}+ \int_{|x|> 1} \frac{|(-\Delta)^{\frac\sigma2}\phi|^{\frac{1}{1-m}}}{v^{\frac{m}{1-m}}}\\&\leq
        c_1+ \int_{|x|>1} \frac{c}{|x|^{\frac{N+\sigma-m\alpha}{1-m}}}<c.
    \end{aligned}
\end{equation}
Observe that $\alpha<N+\frac{\sigma}{m}$ implies $\frac{N+\sigma-m\alpha}{1-m}>N$. Now, by~\eqref{5},
\[
    \int_{0}^{b}\int_{\mathbb{R}^N}|(u^m-u_{\varepsilon}^m)(-\Delta)^{\frac\sigma2}\phi|\le c\varepsilon^m.
\]
We have the same estimate for the term involving $\hat u$ and $\hat u_\varepsilon$. Then~\eqref{eq:decomposition} becomes
$$
    \quad \int_{0}^{b}\int_{\mathbb{R}^N}(u_{\varepsilon}^m-\hat{u}_{\varepsilon}^m)(-\Delta)^{\frac\sigma2}\phi\leq 2c\varepsilon^m+ \int_{0}^{b}\int_{\mathbb{R}^N}(u^m-\hat{u}^m)(-\Delta)^{\frac\sigma2}\phi.
$$
Using~\eqref{6} and the weak formulation~\eqref{vwe}, this yields
\begin{equation}\label{7}
    \int_{0}^{b}\int_{\mathbb{R}^N}(u_{\varepsilon}^m-\hat{u}_{\varepsilon}^m)(-\Delta)^{\frac\sigma2}\phi\leq\varepsilon+2c\varepsilon^m+ \int_{0}^{b}\int_{\mathbb{R}^N}(u_{\varepsilon}-\hat{u}_{\varepsilon})\partial_t\phi.
\end{equation}
Put  $\phi=\Phi'_h(u_{\varepsilon}-\hat{u}_{\varepsilon})\tilde{\phi}$, where $\Phi_h$ is a smooth approximation of the absolute value function and $0\leq \tilde{\phi}\in C^\infty(\mathbb{R}^N\times(0,b))$ will be chosen in terms of $\psi$. Then,
integrating by parts in time twice,
$$
    \int_{0}^{b}\int_{\mathbb{R}^N}(u_{\varepsilon}-\hat{u}_{\varepsilon})\partial_t\phi= - \int_{0}^{b}\int_{\mathbb{R}^N}(\Phi'_h(u_{\varepsilon}-\hat{u}_{\varepsilon})\tilde{\phi})\partial_t(u_{\varepsilon}-\hat{u}_{\varepsilon})= \int_{0}^{b}\int_{\mathbb{R}^N}\Phi_h(u_{\varepsilon}-  \hat{u}_{\varepsilon})\partial_t\tilde{\phi}.
$$
Letting $h\to0^+$, we get
$$
    \displaystyle\int_{0}^{b}\int_{\mathbb{R}^N}(u_{\varepsilon}-\hat{u}_{\varepsilon})\partial_t\phi =  \int_{0}^{b}\int_{\mathbb{R}^N}|u_{\varepsilon}-  \hat{u}_{\varepsilon}|\partial_t\tilde{\phi}.
$$
As to the left-hand side of~\eqref{7}, integrating by parts in space, letting $h\to0^+$ and using Kato's inequality, we get
$$
    \int_{0}^{b}\int_{\mathbb{R}^N}(u_{\varepsilon}^m-\hat{u}_{\varepsilon}^m)(-\Delta)^{\frac\sigma2}\left(\Phi'_h(u_{\varepsilon}-\hat{u}_{\varepsilon})\tilde{\phi}\right)\ge\int_{0}^{b}\int_{\mathbb{R}^N}\tilde{\phi}(-\Delta)^{\frac\sigma2}|u_{\varepsilon}^m-\hat{u}_{\varepsilon}^m|.
$$
We then perform another integration by parts, and \eqref{7} becomes
\begin{equation}\label{eq:aproximadas}
    \int_{0}^{b}\int_{\mathbb{R}^N}|u_{\varepsilon}^m-\hat{u}_{\varepsilon}^m|(-\Delta)^{\frac\sigma2}\tilde{\phi}\leq \varepsilon+2c\varepsilon^m+ \int_{0}^{b}\int_{\mathbb{R}^N}|u_{\varepsilon}-\hat{u}_{\varepsilon}|\partial_t\tilde{\phi}.
\end{equation}
On the other hand, 
\begin{align*}
    \Big|\int_{0}^{b}\int_{\mathbb{R}^N}(|u^m&-\hat{u}^m|-|u_{\varepsilon}^m-\hat{u}_{\varepsilon}^m|)(-\Delta)^{\frac\sigma2}\tilde{\phi}\Big| \le  \int_{0}^{b}\int_{\mathbb{R}^N}(|u^m-u_{\varepsilon}^m|+|\hat{u}^m-\hat{u}_{\varepsilon}^m|)|(-\Delta)^{\frac\sigma2}\tilde\phi|\\
    &\overset{\text{Concavity}}{\le}
    \int_{0}^{b}\int_{\mathbb{R}^N}(|u-u_{\varepsilon}|^m+|\hat{u}-\hat{u}_{\varepsilon}|^m)\frac{v^m}{v^m}|(-\Delta)^{\frac\sigma2}\tilde\phi| \\
    &\hskip6pt\overset{\text{Hölder}}{\le}\int_{0}^{b}\left(\int_{\mathbb{R}^N}\frac{|(-\Delta)^{\frac\sigma2}\tilde\phi|^{\frac{1}{1-m}}}{v^{\frac{m}{1-m}}}\right)^{1-m}
    \left(\left(\int_{\mathbb{R}^N}|u-u_{\varepsilon}|v\right)^m+\left(\int_{\mathbb{R}^N}|\hat{u}-\hat{u}_{\varepsilon}|v\right)^m\right)\\
    &\hskip-2pt\overset{\text{\eqref{5}}+\text{\eqref{eq:bound.phi.v}}}{\le} c\varepsilon^m.
\end{align*}
Thus, letting $\varepsilon\to 0^+$ in~\eqref{eq:aproximadas},
\begin{equation}\label{8}
    \int_{0}^{b}\int_{\mathbb{R}^N}|u^m-\hat{u}^m|(-\Delta)^{\frac\sigma2}\tilde{\phi}\leq \int_{0}^{b}\int_{\mathbb{R}^N}|u-\hat{u}|\partial_t\tilde{\phi}.
\end{equation}

Fix any positive times $0<\tau<t<b$. We take $\tilde{\phi}=F_{k,\tau}(t)\psi(x),$ where $F_{k,\tau}$ is a smooth approximation of the characteristic function $\chi_{[\tau,t]}(t)$, and $0\leq\psi\in C^{\infty}_0(\mathbb{R}^N)$. Letting $k\to\infty$ in~\eqref{8} we get
$$
    \int_{\tau}^{t}\int_{\mathbb{R}^N}|u^m-\hat{u}^m|(-\Delta)^{\frac\sigma2}\psi\leq \int_{\mathbb{R}^N}\psi|u-\hat{u}|(\tau) -\int_{\mathbb{R}^N}\psi|u-\hat{u}|(t).
$$
Since $u$ and $\hat{u}$ take the same initial value in $L^1_{\textup{loc}}$, we can pass to the limit as $\tau\to 0$ making the first term on the right vanish, so we obtain,
$$
 \int_{\mathbb{R}^N}w(-\Delta)^{\frac\sigma2}\psi\leq 0\quad\textup{for all }0\leq\psi\in C^{\infty}_c(\mathbb{R}^N),
$$
that is, $w$ is $\frac{\sigma}{2}$--subharmonic in the very weak sense.
\end{proof}

This result will be combined with an estimate for $\frac\sigma2$-subharmonic functions due to Silvestre~\cite[Proposition 2.19]{silvestre}, stated below. The estimate is written in terms of a bounded smooth function~$\gamma$ constructed in that paper, that satisfies $\gamma\ge0$ and $\gamma(x)\sim |x|^{-N-\sigma}$ for $|x|$ large. Put
$$
 \gamma_R (x) =R^{-N}\gamma(x/R),
$$
where $\gamma$ is Silvestre's function.

\begin{proposition}[Silvestre~\cite{silvestre}]\label{proposilvestre}
    A function $f\in \mathcal{L}_{\sigma}$ satisfies $(-\Delta)^{\frac\sigma2}f\leq0$ in an open set $\Omega$ if and only if it
    is upper semicontinuous in $\Omega$  and $f(x_0)\leq f\ast \gamma_{R}(x_0)$ for any $x_0$ in $\Omega$ and $R\leq \operatorname{dist}(x_0, \partial\Omega)$.
\end{proposition}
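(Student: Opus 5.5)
This is Silvestre's characterization, so the proof follows \cite[Proposition 2.19]{silvestre}. The key ingredient is the structure of $\gamma$. Silvestre constructs a smooth bounded $\Gamma$ that coincides with the fundamental solution $\Phi(x)=c|x|^{\sigma-N}$ of $(-\Delta)^{\frac\sigma2}$ outside $B_1$. It is obtained by replacing $\Phi$ inside the ball with a suitable smooth function, and is chosen so that $\gamma:=(-\Delta)^{\frac\sigma2}\Gamma\ge0$. One can arrange $\int\gamma=1$, and the nonlocal tail gives $\gamma\sim|x|^{-N-\sigma}$. Setting $\Gamma_R(x)=R^{\sigma-N}\Gamma(x/R)$, scaling gives $(-\Delta)^{\frac\sigma2}\Gamma_R=\gamma_R$. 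Hence, distributionally,
\[
(-\Delta)^{\frac\sigma2}(\Phi-\Gamma_R)=\delta_0-\gamma_R .
\]
Here $\Phi-\Gamma_R\ge0$ is supported in $\overline{B_R}$. Monotonicity in $R$ is the key pointwise fact: $\Gamma_R$ decreases as $R$ grows, so $\Gamma_r\ge\Gamma_R$ for $r<R$. The whole argument rests on these properties, and I take them from the construction in \cite{silvestre}.

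\emph{Necessity.} Assume $(-\Delta)^{\frac\sigma2}f\le0$ in $\Omega$, fix $x_0$, and take $R<\operatorname{dist}(x_0,\partial\Omega)$. First regularize $f$ by a mollification $f_\varepsilon=f*\eta_\varepsilon$, which is smooth, lies in $\mathcal{L}_\sigma$, and satisfies $(-\Delta)^{\frac\sigma2}f_\varepsilon\le0$ in $\Omega_\varepsilon=\{x:\operatorname{dist}(x,\partial\Omega)>\varepsilon\}$. For $0<r<R$, test the identity above against $f_\varepsilon$:
\[
f_\varepsilon*\gamma_r(x_0)-f_\varepsilon*\gamma_R(x_0)=\int (\Gamma_r-\Gamma_R)(x_0-y)\,(-\Delta)^{\frac\sigma2}f_\varepsilon(y)\,dy\le0 .
\]
This holds because $\Gamma_r-\Gamma_R\ge0$ is supported in $B_R(x_0)\subset\Omega_\varepsilon$. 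The integration by parts is legitimate since $\Gamma_r-\Gamma_R$ is compactly supported and bounded, and $f_\varepsilon\in\mathcal{L}_\sigma$. Letting $r\to0$, the kernel $\gamma_r$ is an approximate identity, so $f_\varepsilon(x_0)\le f_\varepsilon*\gamma_R(x_0)$. Then let $\varepsilon\to0$. The right-hand side converges because $\gamma_R\lesssim\rho(\cdot/R)$ and $f\in\mathcal{L}_\sigma$, so it tends to $f*\gamma_R(x_0)$. At the same time $r\mapsto f_\varepsilon*\gamma_r(x_0)$ is monotone, so $\bar f(x_0):=\lim_{r\to0}f*\gamma_r(x_0)$ exists as a decreasing limit. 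This $\bar f$ is upper semicontinuous, being an infimum of continuous functions, and it agrees with $f$ a.e. by Lebesgue differentiation. Identifying $f$ with this representative gives both the upper semicontinuity and the mean value inequality.

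\emph{Sufficiency.} Conversely, assume $f$ is upper semicontinuous and $f(x_0)\le f*\gamma_R(x_0)$ in $\Omega$. Take $0\le\psi\in C_c^\infty(\Omega)$. The same identity gives
\[
\frac{f*\gamma_R-f}{R^{\sigma}}\;\longrightarrow\;-c\,(-\Delta)^{\frac\sigma2}f
\]
in the distributional sense as $R\to0$. Concretely,
\[
\int (f*\gamma_R-f)\psi=\int f\,(\gamma_R*\psi-\psi),
\]
and $R^{-\sigma}(\gamma_R*\psi-\psi)\to-c(-\Delta)^{\frac\sigma2}\psi$ with decay $|x|^{-N-\sigma}$, uniformly dominated by a multiple of $\rho$. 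This follows from the symbol of $\gamma$, namely $\hat\gamma(\xi)=|\xi|^\sigma\hat\Gamma(\xi)$ with $1-\hat\gamma(R\xi)\sim cR^\sigma|\xi|^\sigma$. Since the left-hand side is $\ge0$, we conclude $\int f(-\Delta)^{\frac\sigma2}\psi\le0$.

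\emph{Main obstacle.} The delicate step is the existence and properties of $\Gamma$: its smoothness, the sign $\gamma\ge0$, and the monotonicity in $R$. Choosing the interior modification correctly is exactly Silvestre's construction. Beyond that, the remaining difficulty is justifying the limit passages with only $\mathcal{L}_\sigma$ integrability, which works by domination by $\rho$.
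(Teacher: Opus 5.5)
The paper does not prove this proposition; it quotes it from Silvestre's Proposition~2.19. Your sketch is a correct reconstruction of Silvestre's argument: kernels $\Gamma_R$ equal to the fundamental solution outside $B_R$ and nonincreasing in $R$; mollification plus the monotone limit of $f*\gamma_r(x_0)$ as $r\to0$ for necessity, which also produces the upper semicontinuous representative; and $R^{-\sigma}(\gamma_R*\psi-\psi)\to-c(-\Delta)^{\frac\sigma2}\psi$ for sufficiency.

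Only minor points need adjusting:
\begin{itemize}
\item As I recall Silvestre's construction, $\Gamma$ is $C^{1,1}$ rather than smooth: it is the paraboloid tangent to $\Phi$ on $\partial B_1$. For it, $\partial_R\Gamma_R\le0$ is a one-line computation using $N>\sigma$, and the integration by parts against $f_\varepsilon$ is justified by mollifying $\Gamma_r-\Gamma_R$.
\item $\int\gamma=1$ is automatic rather than arranged, since $1-\hat\gamma(\xi)=|\xi|^\sigma\widehat{(\Phi-\Gamma)}(\xi)$.
\item The $\rho$-domination in the sufficiency step does not follow from the symbol. It is read off from the homogeneity identity $R^{-\sigma}(\gamma_R*\psi-\psi)=-R^{-N}(\Phi-\Gamma)(\cdot/R)*(-\Delta)^{\frac\sigma2}\psi$.
\end{itemize}
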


With this we are now able to establish the uniqueness within the class of weighted solutions for any $m\in (0,1)$. The result is new in the range~$m\in(0,m_c]$.

\begin{theorem}[Uniqueness]\label{uniquenessthm}
   Let $u,\hat{u}$ be two weighted solutions to the equation~\eqref{eq:main} with the same initial datum. Then $u\equiv\hat{u}$.
\end{theorem}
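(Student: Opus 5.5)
The plan is to follow Herrero--Pierre. By Proposition~\ref{subharmonic}, for each fixed $t>0$ the function $w(\cdot,t)=\int_0^t|u^m-\hat u^m|(\cdot,s)\,ds\ge0$ is $\frac\sigma2$-subharmonic in all of $\mathbb{R}^N$ in the very weak sense. Since the weights are taken equal, $u^m,\hat u^m\in L^1_{\rm loc}((0,\infty);\mathcal{L}_\sigma)$. Hence $w(\cdot,t)\in\mathcal{L}_\sigma$, and Silvestre's characterization (Proposition~\ref{proposilvestre}) applies with $\Omega=\mathbb{R}^N$. It gives, after passing to the upper semicontinuous representative, that for every $x_0$ and every $R>0$
\[
0\le w(x_0,t)\le w(\cdot,t)\ast\gamma_R(x_0)=R^{-N}\int_{\mathbb{R}^N}w(y,t)\,\gamma\Big(\frac{x_0-y}{R}\Big)\,dy .
\]
A mollification argument may be needed to pass from the very weak inequality to the pointwise mean value property: regularize $w_\delta=w\ast\eta_\delta$, which is still subharmonic, and let $\delta\to0$. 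The task is therefore to show that the right-hand side tends to $0$ as $R\to\infty$, which forces $w\equiv0$. Then $u^m=\hat u^m$ a.e., so $u=\hat u$.

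To estimate the right-hand side I would use $\gamma(z)\le c(1+|z|)^{-N-\sigma}$ and concavity, $|u^m-\hat u^m|\le|u-\hat u|^m$. Then Hölder's inequality with exponents $1/m$ and $1/(1-m)$ against the weight $v=v_\alpha$, exactly as in~\eqref{eq:bound.phi.v}, gives
\[
\int_{\mathbb{R}^N}|u-\hat u|^m\gamma_R\le\Big(\int|u-\hat u|\,v\Big)^m\Big(\int\frac{\gamma_R^{1/(1-m)}}{v^{m/(1-m)}}\Big)^{1-m}.
\]
The first factor is integrable in time on $(0,t)$ because both solutions are weighted. The second is computed by scaling. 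Since $\gamma_R\sim R^{-N}$ on $B_R$ and $\gamma_R\sim R^{\sigma}|y|^{-N-\sigma}$ outside, and $v^{-1}\sim|y|^{\alpha}$, it behaves like
\[
\Big(R^{-\frac{N}{1-m}}R^{N+\frac{m\alpha}{1-m}}+\cdots\Big)^{1-m}\sim R^{\,m\alpha-Nm}=R^{-m(N-\alpha)}.
\]
The tail converges because $\alpha<N+\sigma/m$. This decays only if $\alpha<N$, and the lower end of $\Theta$ allows $\alpha>N-\frac{\sigma}{1-m}$, so that range is available. However, a weighted solution may come with a larger $\alpha$, and Hölder is wasteful here.

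The main obstacle is therefore this decay as $R\to\infty$ when only the weighted bound on $u-\hat u$ is known. My first attempt would avoid Hölder and estimate $\int w\,\gamma_R$ directly using the contraction estimate~\eqref{weight+}. Applied in both orders with $\tau=0$, and using that the initial data coincide, it gives
\[
\int|u-\hat u|(x,s)\,v(x/R)\,dx\le\big(CsR^{N(1-m)-\sigma}\big)^{\frac1{1-m}}.
\]
This is a bound on $|u-\hat u|$ in the rescaled weight, growing like $R^{N-\frac{\sigma}{1-m}}$. I would then apply Hölder at scale $R$ with $v(\cdot/R)$ in place of $v$. The factor $\big(\int\gamma_R^{1/(1-m)}v(\cdot/R)^{-m/(1-m)}\big)^{1-m}$ scales as $R^{-Nm}$, so
\[
w(x_0,t)\le c\,t\,R^{-Nm}\Big(R^{N-\frac{\sigma}{1-m}}\Big)^{m}=c\,t\,R^{-\frac{\sigma m}{1-m}}\to0 .
\]
This closes the argument for every $m\in(0,1)$ without distinguishing $m\le m_c$. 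I expect the delicate points to be verifying the tail integrability of $\gamma^{1/(1-m)}v^{-m/(1-m)}$, which needs $\alpha<N+\sigma/m$ and holds within $\Theta$, and justifying the pointwise Silvestre inequality for the merely $L^1_{\rm loc}$ function $w(\cdot,t)$.
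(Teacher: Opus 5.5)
Your final argument is essentially the paper's proof: Silvestre's mean value inequality at scale $R$, concavity of $s\mapsto s^m$, H\"older against the rescaled weight $v(\cdot/R)$, the contraction estimate~\eqref{weight+} with $\tau=0$, and the $R$-independent bound on $\int\gamma^{\frac{1}{1-m}}(x/R-z)\,v^{-\frac{m}{1-m}}(z)\,dz$ (using $\alpha<N+\frac{\sigma}{m}$), which together give decay like $R^{-\frac{m\sigma}{1-m}}$. The only slip is the time factor: it should be $t^{\frac{1}{1-m}}$ (from $\int_0^t s^{\frac{m}{1-m}}\,ds$) rather than $t$, which is harmless for the conclusion.
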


\begin{proof}
We know from Definition~\ref{def1} that $u^m, \hat{u}^m \in L^1_{\mathrm{loc}}((0,\infty); \mathcal{L}_{\sigma}(\mathbb{R}^N))$. This implies that the function $w$ defined in~\eqref{w} satisfies $w(t) \in \mathcal{L}_{\sigma}$ for almost every $t>0$. Moreover, by Propositions~\ref{subharmonic} and~\ref{proposilvestre}, we obtain that 
\[
    w(x,t)\leq \int_{\mathbb{R}^N}^{}w(y,t)\gamma_R(x-y)\, dy.
\]
Using again the concavity of the function $s\mapsto s^m$, we estimate
$$
    w(x,t)\le c \int_{0}^{t}\int_{\mathbb{R}^N}^{}|u-\hat{u}|^m(y,s)\gamma_R(x-y)\frac{v^m(y/R)}{v^m(y/R)}\,dyds,
$$
where $v=v_\alpha\in\Theta$ is a common weight for $u$ and $\hat{u}$.
Applying Hölder’s inequality and using estimate~\eqref{weight+} with absolute value and $\tau=0$ (the first term on the right-hand side vanishes, since $u$ and $\hat u$ share the same initial datum), we obtain 
$$
  w(x,t)\le \frac{ct^{\frac{1}{1-m}}}{R^{N-mN+{\frac{m\sigma}{1-m}}}}\left(\int_{\mathbb{R}^N}^{}\frac{\gamma^{\frac{1}{1-m}}((x-y)/R)}{v^{\frac{m}{1-m}}(y/R)}\,dy\right)^{1-m}
  =\frac{t^{\frac{1}{1-m}}}{R^{{\frac{m\sigma}{1-m}}}}\left(\int_{\mathbb{R}^N}^{}\frac{\gamma^{\frac{1}{1-m}}(x/R-z)}{v^{\frac{m}{1-m}}(z)}\,dz\right)^{1-m}.
$$
Observe that the last integral is bounded independently of $R$ for $|z|\le 2|x|$ since $v$ is locally strictly positive and $\gamma$ is bounded. So, we only have to consider $|z|\ge 2|x|$; in this region we have 
$$
    v^{\frac{m}{1-m}}(z)\sim |z|^{-\frac{m\alpha}{1-m}}\quad\text{and}\quad \gamma^{\frac{1}{1-m}}(x/R-z)\le c|z|^{-\frac{N+\sigma}{1-m}},
$$
so we need $\frac{N+\sigma-m\alpha}{1-m}>N$, which is fulfilled, since $\alpha<N+\frac{\sigma}{m}$. Hence, 
$$
 w(x,t)\le ct^{\frac{1}{1-m}}R^{-\frac{m\sigma}{1-m}}\rightarrow0 \quad\text{when $R$ tends to infinity.}
$$
Therefore, $w(x,t)\equiv 0$, and we conclude $u\equiv \hat {u}$. 
\end{proof}

As a consequence of uniqueness we can prove two comparison principles.

\begin{theorem}[Comparison]\label{thm:comparison}
    Let $u,\hat{u}$ be two very weak weighted solutions to equation~\eqref{eq:main}, with the same weight $v\in\Theta$, such that $u(x,0)\leq \hat{u}(x,0)$ in $\mathbb{R}^N$. Then
    $$
        u(x,t)\leq \hat{u}(x,t)\quad \text{in }\mathbb{R}^N,\;t>0.
    $$
\end{theorem}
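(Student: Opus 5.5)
The paper says comparison follows from uniqueness "using an approximation argument", so the plan is: build solutions that are ordered by construction, and then use Theorem~\ref{uniquenessthm} to identify them with $u$ and $\hat u$.

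\textbf{Step 1: a constructive existence scheme that preserves order.} Let $u_0=u(\cdot,0)\le \hat u_0=\hat u(\cdot,0)$, both in $L^1_v(\mathbb{R}^N)$ with $v=v_\alpha\in\Theta$. Truncate and cut off the data:
$$u_{0,n}=\min\{u_0,n\}\chi_{B_n},\qquad \hat u_{0,n}=\min\{\hat u_0,n\}\chi_{B_n}.$$
Then $u_{0,n},\hat u_{0,n}\in L^1\cap L^\infty$, with $u_{0,n}\le\hat u_{0,n}$, and both sequences are nondecreasing in $n$. For such data the solutions $u_n,\hat u_n$ are the weak solutions of~\cite{b1}, obtained for instance as limits of the nondegenerate approximations $u^m\mapsto (u+\delta)^m-\delta^m$. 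For these bounded $L^1$ solutions the standard $L^1$-contraction holds:
$$\int (u_n-\hat u_n)_+(t)\le \int (u_{0,n}-\hat u_{0,n})_+=0,$$
so $u_n\le \hat u_n$. Monotonicity in $n$ follows from the same contraction.

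\textbf{Step 2: identify the monotone limits.} Set $\bar u=\lim u_n$ and $\bar{\hat u}=\lim\hat u_n$. I would reuse the argument of~\cite{b3}, where weighted solutions were constructed exactly as such monotone limits. It gives uniform bounds in $L^1_v$ from the weighted estimate for the approximations, and then monotone convergence lets one pass to the limit in~\eqref{vwe}, since $u_n^m\to\bar u^m$ in $L^1_{\rm loc}((0,\infty);\mathcal L_\sigma)$ by domination. Hence $\bar u$ and $\bar{\hat u}$ are weighted solutions with data $u_0$ and $\hat u_0$, and they satisfy $\bar u\le\bar{\hat u}$. By Theorem~\ref{uniquenessthm}, $\bar u=u$ and $\bar{\hat u}=\hat u$, so $u\le\hat u$.

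\textbf{Main obstacle.} The main difficulty is Step 2. One must check that the monotone limits satisfy the initial condition in $L^1_{\rm loc}$ and lie in $L^1_{\rm loc}((0,\infty);L^1_v)$, and that this holds for the given weight rather than some other one. Both solutions must share a common weight, which is fine because the hypothesis provides one.

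For the initial trace I would use~\eqref{weight+} applied to $u_n$ and $u_k$ with the weight $v(\cdot/R)$. It gives
$$\int (u_k-u_n)(t)\,v(x/R)\le \Big[\Big(\int (u_{0,k}-u_{0,n})\,v(x/R)\Big)^{1-m}+CtR^{N(1-m)-\sigma}\Big]^{\frac1{1-m}}.$$
This is a locally uniform in time Cauchy estimate, and it yields continuity at $t=0$ in $L^1_{\rm loc}$, as well as $\sup_n\int u_n(t)v<\infty$ by taking $R$ fixed and using $v(x/R)\ge c_Rv(x)$.

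If this route proves cumbersome, there is an alternative: apply the uniqueness mechanism directly to $(u-\hat u)_+$. Redo Proposition~\ref{subharmonic} with $\operatorname{sign}_+$ in place of $\operatorname{sign}$, where Kato's inequality still holds for positive parts. This shows that $w_+=\int_0^t(u^m-\hat u^m)_+\,ds$ is $\frac\sigma2$-subharmonic. Then repeat the proof of Theorem~\ref{uniquenessthm}, using the positive-part version of~\eqref{weight+}, whose initial term vanishes because $u_0\le\hat u_0$. This gives $w_+\equiv0$, hence $u\le\hat u$. I expect this second route to be actually the cleanest, and I would present it as the main argument if the approximation details become heavy.
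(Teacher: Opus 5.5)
Your Steps 1--2 are the paper's proof. The paper also takes ordered, nondecreasing $L^1\cap L^\infty$ approximations of the data, uses the comparison from \cite{b1} for them, and identifies the monotone limits with $u$ and $\hat u$ through Theorem~\ref{uniquenessthm}. The paper takes for granted, from the construction in \cite{b3}, that these limits are weighted solutions with the prescribed weight and initial trace; you set out to check this with \eqref{weight+}.

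One correction there. For $m\le m_c$ the term $CtR^{N(1-m)-\sigma}$ does not tend to zero as $k,n\to\infty$ at a fixed $t>0$, so your bound is not a Cauchy estimate for positive times. It is small only as $t\to0$ (or, if you apply it from a nearby starting time $\tau$, as $t-\tau\to0$). That is enough for the initial trace and for time continuity.

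Your fallback route is genuinely different, and it is also correct. Since $\operatorname{sign}_+(u-\hat u)=\operatorname{sign}_+(u^m-\hat u^m)$, the one-sided Kato inequality makes $w_+=\int_0^t(u^m-\hat u^m)_+\,ds$ a $\frac\sigma2$-subharmonic function, exactly as in Proposition~\ref{subharmonic}. The $\tau\to0$ term there becomes $\int\psi(u_0-\hat u_0)_+=0$. Then $(u^m-\hat u^m)_+\le(u-\hat u)_+^m$, together with \eqref{weight+} (which the paper in fact states for positive parts, with vanishing initial term), reproduces $w_+\le ct^{\frac1{1-m}}R^{-\frac{m\sigma}{1-m}}\to0$.

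The two routes buy different things.
\begin{itemize}
\item The paper's route is two lines given the literature. It rests on the comparison from \cite{b1} and on the existence-by-monotone-approximation machinery of \cite{b3}.
\item Your one-sided version of the uniqueness proof needs neither of these, and no identification of limits. It applies directly to any two weighted solutions with a common weight, and it contains Theorem~\ref{uniquenessthm} as the special case of equal data. It uses only the tools of Section~\ref{sec:uniqueness,comparison}, at the same level of rigor as the paper's uniqueness argument.
\end{itemize}
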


\begin{proof}
 Let $u_{0,n}, z_{0,n} \in L^1(\mathbb{R}^N)\cap L^{\infty}(\mathbb{R}^N)$ be nondecreasing sequences converging monotonically and in $L^1_v(\mathbb{R}^N)$ respectively to $u_0$ and $z_0$, and such that $u_{0,n}\le z_{0,n}$. Let $u_n,z_n$ denote the solutions corresponding respectively to $u_{0,n},z_{0,n}$. By the comparison result in~\cite[Theorem 2.4]{b1}, $u_n(x,t)\leq z_n(x,t)$. Letting $n\to\infty$ we get the result, since, by Theorem~\ref{uniquenessthm}, the (monotone) limits $u=\lim_{n\to\infty}u_n$ and $z=\lim_{n\to\infty}z_n$ are, respectively, the unique solutions to~\eqref{eq:main} with initial data $u_0$ and $z_0$.
\end{proof}

\begin{remark}
    This argument also applies to the classical case involving the standard Laplacian. The theory of very weak solutions developed by Herrero and Pierre in \cite{herreropierre} provides both existence and uniqueness in this setting. These solutions are locally integrable in space, and a comparison principle for very weak solutions can be established via approximation through smooth solutions.
\end{remark}

 We also prove comparison by concentration, an adaptation of Theorem~5.3 in \cite{volzone}. It plays a key role when dealing with data in $\mathcal{M}^p$.

\begin{proposition}[Concentration comparison]\label{propo:concentrationcomparison}
Let $u$ be the solution to problem~\eqref{eq:main} with initial datum $u_0\in \mathcal{M}^p(\mathbb{R}^N)$, $u_0\ge 0$, and let $z$ be the solution to the same problem with $u_0$ replaced by~$u_0^{\#}$. If $z(t)\in L^p(\mathbb{R}^N)$, respectively $z(t)\in \mathcal{M}^p(\mathbb{R}^N)$, for some $p\in[1,\infty]$ and all $t>0$, then
\begin{equation}\label{eq:concentration.ordering}
    u^{\#}(t)\prec z(t)\quad\textup{for all }t>0,
\end{equation}
and $u(t)\in L^p(\mathbb{R}^N)$, respectively $u(t)\in \mathcal{M}^p(\mathbb{R}^N)$, with
\[
    \|u(t)\|_p\le \|z(t)\|_p,\quad \textup{respectively } \|u(t)\|_{\mathcal{M}^p} \le \|z(t)\|_{\mathcal{M}^p},\quad \text{for all }t>0.
\]

\end{proposition}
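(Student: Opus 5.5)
The plan is to reduce to the known concentration comparison of \cite{volzone} (Theorem~5.3 there), valid for data in $L^1(\mathbb{R}^N)\cap L^\infty(\mathbb{R}^N)$, and then pass to the limit using the monotone approximation and the uniqueness of Theorem~\ref{uniquenessthm}, as in the proof of Theorem~\ref{thm:comparison}.

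First, take truncations $u_{0,n}=\min\{u_0,n\}\chi_{B_n}$. These lie in $L^1\cap L^\infty$, are nonnegative, and increase monotonically to $u_0$. Since $u_0\in\mathcal{M}^p\subset L^1_v(\mathbb{R}^N)$ for a suitable $v\in\Theta$, the convergence also holds in $L^1_v$. Their rearrangements satisfy $u_{0,n}^\#\le u_0^\#$, and $u_{0,n}^\#\nearrow u_0^\#$, because truncation and restriction commute monotonically with rearrangement. For the bounded integrable solutions $u_n,z_n$ with data $u_{0,n},u_{0,n}^\#$, \cite{volzone} gives $u_n^\#(t)\prec z_n(t)$ for all $t>0$.

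Second, by comparison (\cite[Theorem 2.4]{b1} for the approximations, or Theorem~\ref{thm:comparison}), $u_n\nearrow u$ and $z_n\nearrow z$ pointwise a.e. Here the limits are the unique weighted solutions, by Theorem~\ref{uniquenessthm}. Moreover $z_n\le z$. Monotone convergence of $u_n(t)$ to $u(t)$ gives $u_n^\#(t)\nearrow u^\#(t)$, since the distribution functions increase to that of the limit. Then for each $R$,
\[
\int_{B_R}u^\#(t)=\lim_n\int_{B_R}u_n^\#(t)\le\lim_n\int_{B_R}z_n(t)=\int_{B_R}z(t),
\]
by monotone convergence. This is \eqref{eq:concentration.ordering}. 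One point to check is that $z(t)$ is radial, so that $\prec$ makes sense. This holds by uniqueness and rotation invariance of the equation. Nonincreasing in $|x|$ is not needed for $\prec$ as defined.

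Third, derive the norm bounds. For $L^p$, use $\int_{B_R}u^\#\le\int_{B_R}z\le\int_{B_R}z^\#$, where the second inequality is Hardy–Littlewood \eqref{H-L}. So $u^\#\prec z^\#$ with both rearranged, hence $\|u(t)\|_p=\|u^\#\|_p\le\|z^\#\|_p=\|z(t)\|_p$, using the remark after the definition of $\prec$. For $\mathcal{M}^p$, $p>1$, I expect the main obstacle, since the implication ``$\prec$ gives an ordering of norms'' is not standard for the weak quasinorm \eqref{weaknorm}. I would use instead the equivalent norm $\sup_K |K|^{-(p-1)/p}\int_K|f|$, which for rearranged functions equals $\sup_{R}|B_R|^{-(p-1)/p}\int_{B_R}f^\#$. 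The concentration inequality then gives directly
\[
\sup_R |B_R|^{-\frac{p-1}{p}}\int_{B_R}u^\#\le\sup_R|B_R|^{-\frac{p-1}{p}}\int_{B_R}z^\#.
\]
So the inequality between the Marcinkiewicz norms holds with this (equivalent) choice of norm, and $u(t)\in\mathcal{M}^p$. If exact inequality for the quasinorm \eqref{weaknorm} is required, I would settle for the version with this norm, possibly up to a constant.
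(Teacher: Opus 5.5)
Your proposal is correct and follows the paper's proof. Both approximate monotonically by data in $L^1\cap L^\infty$, apply the concentration comparison of \cite{volzone} to the approximations, identify the monotone limits through Theorem~\ref{uniquenessthm}, and pass to the limit in $\int_{B_R}$ by monotone convergence. Your extra steps only make explicit what the paper leaves as ``the result follows'': using Hardy--Littlewood to upgrade to $u^{\#}(t)\prec z^{\#}(t)$, and using the norm $\sup_K|K|^{-\frac{p-1}{p}}\int_K|f|$ (the paper's defining quasi-norm) for the Marcinkiewicz bound. Your caution there is warranted, since for \eqref{weaknorm} the concentration ordering alone only yields the bound up to the factor $\frac{p}{p-1}$.
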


\begin{proof} We first notice that $\|u_0\|_{\mathcal{M}^p}=\|u^\#_0\|_{\mathcal{M}^p}$ and $\|u_0\|_{p}=\|u^\#_0\|_{p}$. 
Since $\mathcal{M}^p(\mathbb{R}^N)\subset L^1_v(\mathbb{R}^N)$ we can take a nondecreasing sequence $u_{0,n}$ in
$L^{1}(\mathbb{R}^N)\cap L^{\infty}(\mathbb{R}^N)$ approaching $u_0$ in $L^1_v(\mathbb{R}^N)$. Defining now $z_{0,n}=u_{0,n}^{\#}$, it is easy to check that $z_{0,n}\to z_0$ in a monotone way in $L^1_v(\mathbb{R}^N)$. For any $t>0$ the function \(u^{\#}_n(t)\) is radially nonincreasing by construction. Also, from~\cite{volzone} we have that \(z_n(t)\) is radially nonincreasing, and  $u_n^{\#}(t)\prec z_n(t)$ for any $t>0$. By the monotone convergence theorem we have
$$
z_n(t)\uparrow \bar{z}(t),\qquad
u_n(t)\uparrow \bar{u}(t)
\quad\text{in }L^1_v(\mathbb{R}^N).
$$
By uniqueness,  $\overline{z}=z$, $\overline{u}=u$; in particular, \(z(t)\) and \(u(t)\) are radially nonincreasing. Moreover, convergence holds almost everywhere in \(\mathbb{R}^N\). Furthermore, convergence is strong in \(L^1_{\mathrm{loc}}(\mathbb{R}^N)\). Hence, for every \(R>0\),
\[
u_n^{\#}\prec z_n
\quad\Longrightarrow\quad
\int_{B_R}u_n^{\#}(|x|,t)\,dx
\le
\int_{B_R}z_n(|x|,t)\,dx.
\]
Passing to the limit as \(n\to\infty\), we obtain
\[
\int_{B_R}u^{\#}(|x|,t)\,dx
\le
\int_{B_R}z(|x|,t)\,dx;
\]
that is, $u^{\#}(t)\prec z(t)$, and the result follows. 
\end{proof}

%%%%%%%%%%%%%%%%%%%%%%%%%
\section{Theory in Lebesgue and Marcinkiewicz Spaces}\label{sec:theory in Lebesgues sp}

In this section we establish the existence and uniqueness of $L^p$ ($p\ge1$) and $\mathcal{M}^p$  ($p>1$) solutions, and present some of their properties. We first devote our attention to $L^p$ solutions.

\begin{theorem}\label{solution in Lp}
    Let $u_0\in L^p(\mathbb{R}^N)$, $p\in[1,\infty)$. There is a unique $L^p$ solution to~\eqref{eq:main} with initial datum $u_0$.  Moreover, $\|u(t)\|_p$ is essentially nonincreasing in time. 
\end{theorem}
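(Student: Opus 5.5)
Existence will come from approximation by bounded integrable data and a monotone limit, with the weighted theory providing the solution and Theorem~\ref{uniquenessthm} identifying it.

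\textbf{Weighted framework.} Since $L^p(\mathbb{R}^N)\subset\mathcal{M}^p(\mathbb{R}^N)$ for $p>1$, and trivially $L^1\subset L^1_v$, the Hardy--Littlewood computation at the end of Section~\ref{sect:Preliminaries} gives $u_0\in L^1_v(\mathbb{R}^N)$ for some $v=v_\alpha\in\Theta$. For $p\ge p^*$ we take $\alpha\in(N-\frac Np,N+\frac\sigma m)$; for $p<p^*$ any $\alpha$ in the range of $\Theta$ works. By~\cite{b3} there is then a weighted solution $u$ with this initial datum.

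\textbf{Approximation and the $L^p$ bound.} Assume $u_0\ge0$ and take a nondecreasing sequence $u_{0,n}\in L^1\cap L^\infty$, say $u_{0,n}=\min(u_0,n)\chi_{B_n}$, with $u_{0,n}\uparrow u_0$ in $L^1_v$ and in $L^p$. For these data the solutions $u_n$ of~\cite{b1} are available; this needs $L^1\cap L^q$ data with $q>p^*$, which bounded integrable data satisfy. These solutions are ordered by~\cite[Theorem 2.4]{b1} and satisfy the $L^p$ contraction $\|u_n(t)\|_p\le\|u_n(\tau)\|_p\le\|u_{0,n}\|_p$ for $t\ge\tau$, which is standard for the $L^1$ theory (an accretivity estimate, obtained by testing with $u_n^{p-1}$). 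By monotone convergence in $L^1_v$, the limit $\bar u=\lim u_n$ is a weighted solution, as in the proofs of Theorem~\ref{thm:comparison} and Proposition~\ref{propo:concentrationcomparison}. By Theorem~\ref{uniquenessthm}, $\bar u=u$. Fatou's lemma then gives $\|u(t)\|_p\le\liminf_n\|u_{0,n}\|_p=\|u_0\|_p$, so $u\in L^\infty((0,\infty);L^p)$ and $u$ is an $L^p$ solution.

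\textbf{Monotonicity of the norm.} The bound $\|u(t)\|_p\le\|u(\tau)\|_p$ for a.e.\ $0<\tau<t$ is the delicate step. I would restart the problem at time $\tau$ and use uniqueness again. The restricted function $u(\cdot+\tau)$ is a weighted solution with datum $u(\tau)\in L^p\cap L^1_v$, using continuity into $L^1_{\rm loc}$. The construction above, applied to the datum $u(\tau)$, produces the solution with norm bounded by $\|u(\tau)\|_p$. Uniqueness identifies it with $u(\cdot+\tau)$, which gives the claim. For signed data, the same argument applies with a common approximation of positive and negative parts, using the $L^1$ theory of~\cite{b1}.

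\textbf{Uniqueness.} Let $\hat u$ be another $L^p$ solution with the same datum. Then $\hat u\in L^1_{\rm loc}((0,\infty);L^p)$, so by the Hardy--Littlewood inclusion $\hat u\in L^1_{\rm loc}((0,\infty);L^1_v)$ for the same admissible weight. Hence $\hat u$ is a weighted solution and Theorem~\ref{uniquenessthm} gives $\hat u=u$.

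The main obstacle is this uniqueness step. Theorem~\ref{uniquenessthm} uses~\eqref{weight+} with $\tau=0$, which requires integrability of $\hat u$ in $L^1_v$ down to $t=0$. Local-in-time integrability on $(0,\infty)$ has to be upgraded near $t=0$. I would first try to redo the contraction estimate from time $\tau>0$ and let $\tau\to0$. This uses $L^1_{\rm loc}$ convergence of both solutions to $u_0$ together with tail control from the common weight $v(x/R)$ at fixed $R$.
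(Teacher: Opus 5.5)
Your proposal is correct and follows essentially the paper's route: embed $L^p$ into an admissible $L^1_v$, get existence and uniqueness from the weighted theory and Theorem~\ref{uniquenessthm} (noting, as you do, that any $L^p$ solution is a weighted solution), and get the $L^p$ monotonicity by approximating with the solutions of \cite{b1} and their norm decay, which the paper takes from \cite[Proposition 8.5]{b1}. Two of your steps can be dropped: the ``main obstacle'' you flag is not one, since Theorem~\ref{uniquenessthm} and~\eqref{weight+} (stated down to $\tau=0$) apply to any weighted solution in the sense of Definition~\ref{vww:definition}, which only requires $L^1_{\rm loc}((0,\infty);L^1_v)$; and the restart at time $\tau$ is unnecessary, because $u_n(t)\uparrow u(t)$ gives $\|u_n(t)\|_p\uparrow\|u(t)\|_p$ for a.e.\ $t$, so $\|u_n(t_2)\|_p\le\|u_n(t_1)\|_p$ passes directly to the limit.
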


\begin{proof}
Since $L^p(\mathbb{R}^N)\subset L^1_v(\mathbb{R}^N)$ for some weight $v\in \Theta$, the existence and uniqueness of a weighted solution follow immediately from the previous theory. 
Furthermore, using an approximation argument with weak solutions together with Proposition 8.5 in~\cite{b1}, we deduce $\|u(t_2)\|_p\le \|u(t_1)\|_p$ for almost every pair $0\le t_1<t_2$. This shows that the quantity $\|u(t)\|_p$ is essentially nonincreasing in time and the weighted solution is in fact an $L^p$ very weak solution.
\end{proof}

We consider now the broader class  $\mathcal{M}^p$ of functions. In this setting concentration comparison plays a central role. We thus study in detail the special solution $U$ corresponding to the worst case in $\mathcal{M}^p$, the problem with initial datum the most concentrated function, a multiple of the power $|x|^{-\frac{N}{p}}$. As we have said, it will be used in the proof of the smoothing effects by comparison.

\begin{theorem}\label{propo:selfsimilar}
Let $U$ be the solution to problem~\eqref{eq:main} with initial datum $U_0(x) = A|x|^{-\frac{N}{p}}$, $p\in(1,\infty)$, $A>0$. If $p\neq p^*$ then $U$ is  self-similar,
\begin{equation}\label{sol-selfsimilar}
U(x,t)=t^{-\alpha}S(t^{-\beta}|x|),
\qquad
\alpha=\frac{N}{p\sigma-(1-m)N},
\quad
\beta=\frac{p\alpha}{N},
\end{equation}
while if $p=p^*$ the solution is explicit,
\begin{equation}\label{sol-extinction}
    U(x,t)=c(T-t)^{\frac{1}{1-m}}|x|^{-\frac{\sigma }{1-m}},\qquad cT^{\frac{1}{1-m}}=A,\quad c=c(m,\sigma,N).
\end{equation}
Moreover, if $ p < p^*$, then $
U(x,t) \sim|x|^{-\frac{N}{p}}$ at the origin, while if  $ p > p^*$, then $U(x,t)$ is bounded in $x$ for every $t>0$ and the behaviour $
U(x,t) \sim|x|^{-\frac{N}{p}}$ occurs at infinity. If $p\neq p^*$ the solution is positive for all times, and if $p=p^*$ the solution vanishes identically in the finite time $t=T$.
\end{theorem}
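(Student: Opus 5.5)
The plan rests on uniqueness (Theorem~\ref{uniquenessthm}) combined with scaling invariance. First, $U_0=A|x|^{-N/p}\in\mathcal{M}^p(\mathbb{R}^N)\subset L^1_v(\mathbb{R}^N)$ for admissible weights $v\in\Theta$ with $\alpha>N(p-1)/p$, so there is a weighted solution. By Theorem~\ref{uniquenessthm} it is the only one.

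For $p\ne p^*$, consider the rescaled function $U_\lambda(x,t)=\lambda^{\alpha}U(\lambda^{\beta}x,\lambda t)$ with $\alpha,\beta$ as in \eqref{sol-selfsimilar}. These exponents solve $\alpha(m-1)+\beta\sigma=1$ and $\alpha=N\beta/p$. I will check that $U_\lambda$ is again a weighted solution (possibly after changing the weight, which is harmless since $U_0$ lies in $L^1_v$ for a whole range of $\alpha$) and that it has the same datum, $\lambda^\alpha A\lambda^{-\beta N/p}|x|^{-N/p}=U_0$. Uniqueness then gives $U_\lambda=U$. Choosing $\lambda=1/t$ yields $U(x,t)=t^{-\alpha}U(t^{-\beta}x,1)$, so $S(r):=U(r,1)$. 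Radial symmetry follows the same way, from uniqueness together with rotation invariance. The sign of $\alpha$ is that of $p-p^*$, which separates the two regimes.

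For $p=p^*$, I will look directly for a separated-variables solution $U=g(t)|x|^{-\gamma}$ with $\gamma=\sigma/(1-m)=N/p^*$. Then $U^m=g^m|x|^{-m\gamma}$, and since $0<m\gamma<N$ (which holds because $m\sigma/(1-m)<N$ in the range $m<N/(N+\sigma)$, and $m\le m_c$ is in this range), the classical formula $(-\Delta)^{\sigma/2}|x|^{-m\gamma}=K|x|^{-m\gamma-\sigma}$ applies. The exponents match because $m\gamma+\sigma=\gamma$. We need $K>0$, which is true when $N-\sigma>m\gamma$, i.e.\ $m\gamma<N-\sigma$. This holds since $m/(1-m)\le (N-\sigma)/\sigma$ is equivalent to $m\le m_c$. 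The ODE $g'=-Kg^m$ then gives $g=c(T-t)_+^{1/(1-m)}$. I will verify that this $U$ is a weighted solution (it belongs to $L^1_v$ and $U^m\in\mathcal{L}_\sigma$ because $m\gamma<N$), so by uniqueness it is the solution. It vanishes at $t=T$, and afterwards it stays $0$ by uniqueness.

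The remaining qualitative claims concern the profile $S$. The case $m=m_c$ (i.e.\ $K=0$ at the edge) needs care: there $p^*=1$, which falls outside $p>1$, so no issue arises. For $p>p^*$, boundedness at fixed $t>0$ should follow from the $\mathcal{M}^p$--$L^\infty$ smoothing of \cite{vazquez:Mp_SE} for the minimal solution, now identified with $U$ by uniqueness. For $p<p^*$, I will obtain the behaviour $S(r)\sim r^{-N/p}$ near $0$, and for $p>p^*$ the same behaviour at infinity, by comparison (Theorem~\ref{thm:comparison}) with the explicit barriers $c(T-t)_+^{1/(1-m)}|x|^{-\sigma/(1-m)}$ and scaled copies, using the self-similar form to transfer the estimate between $r\to0$ and $r\to\infty$. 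I would rely on the profile analysis in \cite{vazquez:Mp_SE}, which constructs exactly these self-similar solutions. Positivity for all times in the case $p\neq p^*$ then follows from the self-similar form plus $S>0$ (positivity at one time, e.g.\ via the lower comparison). I expect this asymptotic and positivity step to be the main obstacle: matching the barriers' exponents $\sigma/(1-m)$ against $N/p$ requires truncated or localized subsolutions, which I would try first to build from the separated solution restricted to large $|x|$.
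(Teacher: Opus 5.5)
Your self-similarity argument (rescaling plus Theorem~\ref{uniquenessthm}, then $\lambda=1/t$) is essentially the paper's. So are your separated-variables treatment of $p=p^*$ and your appeal to \cite{vazquez:Mp_SE} for boundedness when $p>p^*$.

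The gap is in the ``moreover'' part: the behaviour $U(x,t)\sim|x|^{-\frac Np}$ at the origin for $p<p^*$ and at infinity for $p>p^*$. You leave this to a barrier argument that you admit is not worked out, and it cannot succeed as described. The only explicit solutions you have, $c(T-t)_+^{\frac1{1-m}}|x|^{-\frac{\sigma}{1-m}}$ and their rescalings, all behave like $|x|^{-\frac N{p^*}}$ with $\frac N{p^*}\neq\frac Np$. In the region where the claim is made (origin for $p<p^*$, infinity for $p>p^*$) they can never lie above $A|x|^{-\frac Np}$. From below they give at best the weaker rate $|x|^{-\frac N{p^*}}$. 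Moreover, \cite{vazquez:Mp_SE} only treats $p>p^*$, so it says nothing about the singularity at the origin when $p<p^*$.

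No comparison is needed: the asymptotics follow directly from the initial condition through the self-similar form. The similarity variable does not link $r\to0$ with $r\to\infty$, as you suggest. It links $t\to0$ at fixed $x$ with $\rho\to0$ or $\rho\to\infty$. Since $\alpha=\frac{\beta N}{p}$, setting $\rho=t^{-\beta}|x|$ gives $U(x,t)=|x|^{-\frac Np}\rho^{\frac Np}S(\rho)$. For fixed $x\neq0$, letting $t\to0$ sends $\rho\to0$ if $p<p^*$ (then $\beta<0$) and $\rho\to\infty$ if $p>p^*$ (then $\beta>0$). So $U(x,t)\to A|x|^{-\frac Np}$ forces $\rho^{\frac Np}S(\rho)\to A$ in that limit. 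Substituting back gives $U(x,t)=(A+o(1))|x|^{-\frac Np}$ as $|x|\to0$, respectively $|x|\to\infty$, for every $t>0$.

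This is the paper's argument. Since the initial trace is only attained in $L^1_{\rm loc}$, you can make it rigorous as follows:
\begin{enumerate}
\item[(a)] integrate over annuli $a<|x|<b$;
\item[(b)] use that $S$ is nonincreasing, because the solution with the rearranged datum $U_0$ is radially nonincreasing, as in the proof of Proposition~\ref{propo:concentrationcomparison};
\item[(c)] let $b/a\to1$.
\end{enumerate}
Positivity for $p\neq p^*$ needs no lower barrier either. If $U(\cdot,t_0)\equiv0$ for some $t_0>0$, then $S\equiv0$, hence $U(t)\equiv0$ for all $t>0$, which contradicts $U(t)\to U_0$ in $L^1_{\rm loc}$.
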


\begin{proof}
In~\cite{vazquez:Mp_SE}, the authors construct the minimal solution to~\eqref{eq:main} with the desired initial data when $p> p^*$, and show that it has the above self-similar form. However, uniqueness was not yet available at that time. Our uniqueness result closes this gap.

As for the subcritical range $p\in (1,p^*)$, the existence of a minimal solution follows by a standard approximation procedure. Since the problem has some kind of homogeneity, self-similarity will follow from uniqueness (this idea works also for the supercritical range). Just observe that for the above exponents $\alpha,\,\beta$, and for every $\lambda>0$, the function $V_\lambda(x,t)=\lambda^\alpha U(\lambda^\beta x,\lambda t)$ is also a solution with initial value $V_\lambda(x,0)=U_0(x)$. Thus, uniqueness yields $U(x,t)=V_\lambda(x,t)$. Putting $\lambda=1/t$ we obtain the desired self-similar expression with $S(\rho)=U(\xi,1)$, $\rho=|\xi|$.

If $ p < p^*$, we have $ \alpha, \beta< 0 $, and thus, for any $|x|\ne0$,
$$
    A|x|^{-\frac{N}{p}}=\lim_ {t\to0}U(x,t)=\lim_ {t\to0}t^{-\alpha}S(t^{-\beta}|x|)=|x|^{-\frac{N}{p}}\lim_{\rho\to0}\rho^{\frac{N}{p}}S(\rho).
$$
Therefore $S(\rho)\sim \rho^{-\frac Np}$ at the origin, which gives the behaviour of $U$ at any time $t>0$. 

The argument for $p>p^*$ is analogous, using the fact that $ \alpha, \beta > 0 $. The boundedness follows from the elliptic equation satisfied by the profile; see~\cite{vazquez:Mp_SE} for the details. 

In the case $p=p^*$ the denominator in the exponents $\alpha,\beta$ vanishes, so instead of the form~\eqref{sol-selfsimilar} we look for solutions in separated variables form, and we find expression~\eqref{sol-extinction} using the formula
\[
    (-\Delta)^{\frac\sigma2}|x|^{- \frac{m\sigma}{1-m}}=\kappa_{m,\sigma,N}|x|^{-\frac{m\sigma}{1-m}-\sigma};
\]
see for instance \cite{lieb-loss}. The coefficient $c$ is then given by
\[
    c^{1-m}(1-m)=\kappa_{m,\sigma,N}.\qedhere
\]
\end{proof}

These solutions were constructed in the local case in \cite{smoothbook}, using phase-plane techniques. A precise behaviour is obtained also at infinity in that paper when $p<\frac{N(1-m)}2$. In our fractional case we obtain next some estimate at infinity in the subcritical range $p<p^*=\frac{N(1-m)}\sigma$; though it is not expected to be optimal, it is enough for our purposes.

\begin{theorem}\label{thm global bound}
    Let $1<p<p^*$ and let $U$ be the solution to problem~\eqref{eq:main} with initial datum $U_0(x)=A|x|^{-\frac Np}$. Then, there exists $c>0$ such that
    \begin{equation}\label{U:estimate}
        U(x,t)\le c\,t^{-\frac 1 m}|x|^{-\frac{N-p\sigma}{pm}},\quad t>0.
    \end{equation}
\end{theorem}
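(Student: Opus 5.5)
The plan is a pointwise argument that uses neither the self-similar form nor the radial symmetry of $U$. It rests on two ingredients. The first is a time-integrated bound on $U^m$ coming from the Riesz potential~\eqref{riesz}. The second is a Bénilan--Crandall type monotonicity in time, obtained from the comparison Theorem~\ref{thm:comparison} and the homogeneity of the datum.

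\emph{Step 1 (monotonicity in time).} For $\lambda>0$ the function $U_\lambda(x,t)=\lambda U(x,\lambda^{m-1}t)$ solves~\eqref{eq:main}: both $\partial_t U_\lambda$ and $(-\Delta)^{\frac\sigma2}U_\lambda^m$ scale like $\lambda^m$. Its initial datum is $\lambda U_0$, which dominates $U_0$ when $\lambda\ge1$. Both $U$ and $U_\lambda$ are weighted solutions with the same weight, so Theorem~\ref{thm:comparison} together with uniqueness (Theorem~\ref{uniquenessthm}) gives $\lambda U(x,\lambda^{m-1}t)\ge U(x,t)$. Given $0<s\le t$, choose $\lambda=(t/s)^{\frac1{1-m}}\ge1$, so that $\lambda^{m-1}t=s$. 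This yields
\[
U(x,s)\ge \Big(\frac st\Big)^{\frac1{1-m}}U(x,t),\qquad 0<s\le t .
\]
In particular, $U^m(x,s)\ge 2^{-\frac m{1-m}}U^m(x,t)$ for $s\in[t/2,t]$.

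\emph{Step 2 (potential estimate).} Formally, apply $(-\Delta)^{-\frac\sigma2}$ to the equation and integrate in time:
\[
\int_0^t U^m(x,s)\,ds=(-\Delta)^{-\frac\sigma2}U_0(x)-(-\Delta)^{-\frac\sigma2}U(t)(x)\le (-\Delta)^{-\frac\sigma2}U_0(x)=cA|x|^{\sigma-\frac Np}.
\]
The inequality uses $U\ge0$. The last identity holds because $p<p^*<\frac N\sigma$, hence $\sigma<\frac Np<N$, so the Riesz potential of $|x|^{-N/p}$ converges and is homogeneous of degree $\sigma-\frac Np$.

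\emph{Step 3 (conclusion).} Combining Steps 1 and 2,
\[
\tfrac t2\, 2^{-\frac m{1-m}}U^m(x,t)\le\int_{t/2}^tU^m(x,s)\,ds\le cA|x|^{\sigma-\frac Np}.
\]
This gives $U(x,t)\le c\,t^{-\frac1m}|x|^{-\frac{N-p\sigma}{pm}}$, which is~\eqref{U:estimate}.

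The main obstacle is justifying Step 2, since $U$ is only a very weak solution. I would prove it first for the approximating solutions $U_n$, taken with data $U_{0,n}\in L^1\cap L^\infty$ increasing to $U_0$ as in the proof of Theorem~\ref{thm:comparison}. For these, the identity $\partial_t(-\Delta)^{-\frac\sigma2}U_n=-U_n^m$ can be justified with the theory of~\cite{b1}, for instance by testing the weak formulation with truncations of the Riesz kernel $|x-y|^{\sigma-N}$ and using the time continuity of $U_n$. This gives $\int_0^tU_n^m\le(-\Delta)^{-\frac\sigma2}U_{0,n}\le(-\Delta)^{-\frac\sigma2}U_0$. Monotone convergence $U_n\uparrow U$, with the limit identified by uniqueness, then yields Step 2 for almost every $x$. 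If only an a.e. statement survives the limit, the final bound is first obtained for a.e. $(x,t)$ and then everywhere by the continuity of the self-similar profile.
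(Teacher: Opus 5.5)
Your argument is correct and has the same skeleton as the paper's proof. The paper also establishes the potential bound $w(x,t)=\int_0^tU^m(x,s)\,ds\le g(x)=(-\Delta)^{-\frac\sigma2}U_0(x)=cA|x|^{\sigma-\frac Np}$ and combines it with the B\'enilan--Crandall estimate $U(t_2)/U(t_1)\le(t_2/t_1)^{\frac1{1-m}}$. There are two cosmetic differences in that part:
\begin{itemize}
\item The paper cites \cite{benilan-crandall}, while you derive the estimate from the scaling $\lambda U(x,\lambda^{m-1}t)$ and Theorem~\ref{thm:comparison}. Note that this only uses $U_0\ge0$, not the homogeneity of the datum.
\item The paper integrates over $[0,t]$ instead of $[t/2,t]$, which only changes the constant.
\end{itemize}

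The genuine difference is how $w\le g$ is made rigorous.

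\emph{The paper's route.} It works directly with the very weak solution $U$. From the equation it only gets $(-\Delta)^{\frac\sigma2}(w-g)\le0$ in the distributional sense. Since $\frac\sigma2$-subharmonic functions on $\mathbb{R}^N$ need not be nonpositive, it then needs a Liouville-type step. This combines Silvestre's mean value characterization (Proposition~\ref{proposilvestre}) with the weighted estimate~\eqref{weight+}, for a weight exponent $\alpha\in(N-\frac Np,N)$, to show that $\int w(y,t)\gamma_R(x-y)\,dy\to0$ as $R\to\infty$. In effect it recycles the machinery of the uniqueness proof.

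\emph{Your route.} You obtain $w\le g$ from the exact Riesz-potential identity for the $L^1\cap L^\infty$ approximations. There $(-\Delta)^{-\frac\sigma2}U_n(t)$ is finite and nonnegative, so no subharmonic remainder can appear. You then pass to $U$ by monotone convergence and uniqueness. This is more elementary and makes the origin of the bound transparent.

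\emph{The price of your route.} You need a cutoff argument to test the weak formulation of \cite{b1} with the non-compactly supported (mollified) Riesz kernel, and you only sketch it. It does go through. Split into dyadic annuli and apply H\"older together with $\|U_n(s)\|_1\le\|U_{0,n}\|_1$. The cutoff errors at scale $R$ are then bounded by a negative power of $R$ (up to a logarithm) times $\|U_{0,n}\|_1^m$, uniformly in time.

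Finally, your appeal to continuity of the self-similar profile is unnecessary, and the paper does not establish it. The a.e.\ bound is all that is used afterwards.
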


\begin{proof}
Integrating~\eqref{eq:Kato} with $u=U$ and $\hat u\equiv0$ in $[0,t]$ we get 
\[
    U(t)\le U_0-\int_0^t (-\Delta)^{\frac\sigma2}U^m(s)\,ds,
\]
whence
\begin{equation}\label{eq:relation.w.u_0}
    (-\Delta)^{\frac\sigma2}w\leq U_0,\quad \text{where }w(x,t)=\int_{0}^{t}U^{m}(x,s)\,ds.
\end{equation}
Let $g=(-\Delta)^{-\frac\sigma2}U_0$, the Riesz potential of $U_0$, as defined in~\eqref{riesz}, which is finite, since $p<\frac{N}{\sigma}$. In fact,
see \cite{lieb-loss},
\begin{equation}\label{rieszradial}
    g=A(-\Delta)^{-\frac{\sigma}{2}}\left(|x|^{-\frac{N}{p}}\right)=AC_{\frac{N}{p},\sigma}|x|^{-\frac{N}{p}+\sigma}\in \mathcal{L}_{\frac\sigma2}.
\end{equation} 
We rewrite~\eqref{eq:relation.w.u_0} as  $(-\Delta)^{\frac\sigma2}(w-g)\leq 0$. This inequality is only known to hold in the distributional sense. Hence, in order to deduce from it that $w\le g$ we have to work a bit more. Recall that $U\in \mathcal{M}^p(\mathbb{R}^N)\subset L_v^1(\mathbb{R}^N)$, for every weight $v\in\Theta$. Take $v=v_\alpha$ with $\alpha<N$. 
Using that $(w-g)\in \mathcal{L}_{\sigma}(\mathbb{R}^N)$, we may use Silvestre's Proposition~\ref{proposilvestre}, to get
\begin{align*}
    w(x,t) &\leq g(x) + \int_{\mathbb{R}^N} \big( w(y,t) - g(y)\big)\,\gamma_R(x-y)\,dy \\
    &\leq g(x) + \int_0^t \int_{\mathbb{R}^N} U^m(y,s)\gamma_R(x-y)\frac{v^m(y/R)}{v^m(y/R)}\,dy\,ds
    \\ &\leq g(x) + \frac{c}{R^N}\int_0^t 
    \left( \|U_0 v(\cdot/R)\|_{1}^{1-m} + \frac{s}{R^{\sigma-N(1-m)}} \right)^{\tfrac{m}{1-m}}
    \left( \int_{\mathbb{R}^N} \frac{\gamma^{\tfrac{1}{1-m}}((x-y)/R)}{v^{\tfrac{m}{1-m}}(y/R)}\,dy \right)^{1-m}\,ds\\
    &\leq g(x) + \frac{c}{R^{Nm}}\int_0^t 
    \left( \|U_0 v(\cdot/R)\|_{1}^{1-m} + \frac{s}{R^{\sigma-N(1-m)}} \right)^{\tfrac{m}{1-m}}\,ds.
\end{align*}
We have used inequality~\eqref{weight+} with $\tau=0$. The integral involving $\gamma$ and $v$ has been shown to be bounded in the proof of Theorem~\ref{uniquenessthm}. Now using the triangular inequality, $(a+b)^\alpha\leq \max\{1,2^{\alpha-1}\}(a^\alpha+b^\alpha)$ for all $a,b>0$ and $\alpha>0$, we have
\begin{align*}
    w(x,t)&\leq g(x) + \frac{c}{R^{Nm}}\int_0^t \left( \|U_0 v(\cdot/R)\|_{1}^{m} + \frac{s^{\tfrac{m}{1-m}}}{R^{\tfrac{\sigma m}{1-m}-mN}}\right) \, ds\\&
    \leq g(x)+\frac{c\,t}{R^{Nm}} \left(\left(\int_{B_1}|x|^{-\frac Np}\,dx\right)^m+\left(\int_{B_1^c}|x|^{-\frac Np}|x/R|^{-\alpha}\,dx\right)^m \right)+ \frac{c\,t^{\tfrac{1}{1-m}}}{R^{\tfrac{\sigma m}{1-m}}}\\&
    \leq g(x)+\frac{c\,t}{R^{Nm}}\left(1+R^{\alpha m}\right)+\frac{c\, t^{\tfrac{1}{1-m}}}{R^{\tfrac{\sigma m}{1-m}}}.
\end{align*}
Since $\alpha<N$, letting $R\to\infty$  we conclude that $w \leq g$. 

On the other hand, as consequence of the homogeneity of the parabolic operator, nonnegative solutions satisfy
$$
    \frac{U(t_2)}{U(t_1)}\leq \left(\frac{t_2}{t_1}\right)^{\frac{1}{1-m}},\quad 0<t_1<t_2; 
$$
see \cite{benilan-crandall}. Combining this with $w\leq g$ we get 
\begin{equation*}
    g(x)\ge \int_{0}^{t}U^m(x,s)\,ds\ge  \int_{0}^{t}U^m(x,t)\left(\frac{s}{t}\right)^{\frac{m}{1-m}}\,ds=(1-m)t\,U^m(x,t),
\end{equation*}
that together with~\eqref{rieszradial} gives
\begin{equation*}
    U(x,t)\le \left(\frac{g(x)}{(1-m)t}\right)^{\frac{1}{m}}=c \ t^{-\frac 1 m}|x|^{-\frac{N-p\sigma}{pm}}\qquad \text{for any $t>0$.}\qedhere
\end{equation*} 
\end{proof}

\begin{remark}
    The global estimate~\eqref{U:estimate} is true for $1<p<\frac N\sigma$. However, it  gives information only if $p<p^*$ and only for large $|x|$, see Theorem~\ref{propo:selfsimilar}.
\end{remark}

\begin{theorem}\label{U in Mp}
   let $U$ be the solution to problem~\eqref{eq:main} with initial datum $U_0(x)=A|x|^{-\frac Np}$, $p>1$. Then $U(t)\in\mathcal{M}^p(\mathbb{R}^N)$ for every $t>0$. 
\end{theorem}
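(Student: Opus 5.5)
We split according to the three regimes of Theorem~\ref{propo:selfsimilar}, using in each case the explicit structure of $U$ and the fact that $U(t)$ is radially nonincreasing; the latter holds by the argument of Proposition~\ref{propo:concentrationcomparison}, since $U_0=U_0^{\#}$ and the approximating solutions are radially nonincreasing. For a radially nonincreasing function $f$ one has $f^*(\omega_N r^N)=f(r)$. Hence, by \eqref{weaknorm}, $\|f\|_{\mathcal{M}^p}=\omega_N^{1/p}\sup_{r>0} r^{N/p} f(r)$. So in every case it suffices to prove the pointwise bound
\[
U(x,t)\le C(t)|x|^{-\frac Np}\quad\text{for all } x\neq0.
\]

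\emph{Case $p=p^*$.} Here $U$ is explicit by \eqref{sol-extinction}: $U(x,t)=c(T-t)_+^{\frac1{1-m}}|x|^{-\frac{\sigma}{1-m}}$, and $\frac{\sigma}{1-m}=\frac{N}{p^*}$. So $U(t)$ is a multiple of $U_0$ (zero for $t\ge T$), which lies in $\mathcal{M}^{p^*}$.

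\emph{Case $p>p^*$.} By Theorem~\ref{propo:selfsimilar}, $U(\cdot,t)$ is bounded and $U\sim|x|^{-\frac Np}$ at infinity. Equivalently, the profile $S$ is bounded and $S(\rho)\le C\rho^{-\frac Np}$ for large $\rho$. Together these give $S(\rho)\le C\min\{1,\rho^{-N/p}\}\le C'\rho^{-N/p}$ for all $\rho>0$. Scaling, with $\beta N/p=\alpha$, yields
\[
U(x,t)=t^{-\alpha}S(t^{-\beta}|x|)\le C'|x|^{-N/p}.
\]
If one prefers not to rely on the asymptotics at infinity, the comparison Theorem~\ref{thm:comparison} offers an alternative: compare $U$ with a stationary supersolution $\Lambda|x|^{-N/p}$. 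I would not take this route as the main one, since it requires a sign computation of $(-\Delta)^{\frac\sigma2}|x|^{-mN/p}$.

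\emph{Case $1<p<p^*$.} This is the main obstacle. Theorem~\ref{propo:selfsimilar} gives $S(\rho)\le C\rho^{-N/p}$ near $\rho=0$. By scaling, this holds for $|x|\le t^{\beta}$, where $\beta<0$, with a constant uniform in $t$. Because $S$ is nonincreasing and continuous on compact subsets of $(0,\infty)$, the bound extends to any fixed range $|x|\le R_0(t)$.

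At infinity I would use Theorem~\ref{thm global bound}:
\[
U(x,t)\le c\,t^{-1/m}|x|^{-\frac{N-p\sigma}{pm}}.
\]
This beats $|x|^{-N/p}$ for large $|x|$ exactly when $\frac{N-p\sigma}{pm}\ge\frac Np$, that is, when $p\le\frac{N(1-m)}{\sigma}=p^*$, which is our range. Hence $|x|^{N/p}U(x,t)\le c\,t^{-1/m}|x|^{\frac Np-\frac{N-p\sigma}{pm}}$ is bounded for $|x|\ge1$.

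Combining the bound near the origin with the bound at infinity gives $\sup_x|x|^{N/p}U(x,t)<\infty$ for every $t>0$, and so $U(t)\in\mathcal{M}^p(\mathbb{R}^N)$. The delicate point is the matching of the two regions. This is handled by the monotonicity of $S$, which yields a finite bound on any intermediate annulus: there $U\le U(R_1)$ while $|x|^{N/p}$ stays bounded.
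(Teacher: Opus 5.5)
Your proposal is correct and follows the paper's proof. It uses the same three-case split: the explicit separated-variables solution for $p=p^*$; boundedness plus the decay $|x|^{-N/p}$ at infinity for $p>p^*$; and, for $1<p<p^*$, the behaviour at the origin combined with the global bound \eqref{U:estimate}, under exactly the same exponent condition $\frac{N-p\sigma}{pm}\ge\frac Np\iff p\le p^*$.

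The only difference is packaging. You reduce everything to a pointwise bound $U(x,t)\le C(t)|x|^{-N/p}$ and use radial monotonicity of $U(t)$ to cover the intermediate annulus, where monotonicity alone suffices and continuity of $S$ is not needed. The paper passes over that annulus step when it asserts $U(t)\in\mathcal{M}^p(B_R)$ for every fixed $R$.
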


\begin{proof}     
\noindent\underline{Case $p< p^*$.} \ 
We know from self-similarity, Theorem~\ref{propo:selfsimilar}, that \( U(t) \) behaves in this range near the origin like the initial value, $U(x,t)\sim|x|^{-\frac{N}{p}}$. Hence, for any fixed \(R>0\), we have \(U(t)\in \mathcal{M}^p(B_R)\). It remains to control the norm for large values of $|x|$. To this end, we estimate the \( \mathcal{M}^p(B_R^c) \)-norm using~\eqref{U:estimate}:
\begin{align*}
    \|U(t)\|_{\mathcal{M}^p(B^c_R)} 
    &\le c\,t^{-\frac{1}{m}}\sup_{\lambda>0} \lambda \left|\left\{|x|\ge R : |x|^{-\frac{N-p\sigma}{pm}} > \lambda \right\}\right|^{\frac{1}{p}} \\
    & \le c\,t^{-\frac{1}{m}}\sup_{\lambda<R^{-\frac{N-p\sigma}{pm}}} \lambda \left|\left\{R\le |x|<\lambda^{-\frac{pm}{N-p\sigma}} \right\}\right|^{\frac{1}{p}}\le c\,t^{-\frac{1}{m}}\sup_{\lambda<R^{-\frac{N-p\sigma}{pm}}}\lambda^{1-\frac{Nm}{N-p\sigma}}.
\end{align*}
The supremum is finite provided $\frac{Nm}{N-p\sigma}\le 1$, i.e., $p\le p^*$.

\noindent\underline{Case $p=p^*$.} \ In this critical case the solution is explicit $U(x,t)=c(t)|x|^{-\frac\sigma{1-m}}=c(t)|x|^{-\frac N{p^*}}$, see~\eqref{sol-extinction}, and satisfies $U(t) \in \mathcal{M}^{p^*}(\mathbb{R}^N)$.

\noindent\underline{Case $p>p^*$.} \
We first recall that $U(t)$ is bounded for any $t>0$. Also, according again to Theorem~\ref{propo:selfsimilar}, the asymptotic behavior of $U(x,t)$ for large $|x|$ is the same as the initial value,
$U(x,t) \sim |x|^{-\frac{N}{p}}$.
This implies that $U(t) \in \mathcal{M}^p(\mathbb{R}^N)$. 
\end{proof}

As a corollary we obtain existence of $\mathcal{M}^p$ solutions.
\begin{theorem}\label{solution in Mp}
    Given $u_0\in \mathcal{M}^p(\mathbb{R}^N)$, $p\in(1,\infty)$, there exists a unique $\mathcal{M}^p$ solution to problem~\eqref{eq:main}. 
\end{theorem}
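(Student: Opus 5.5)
Since $\mathcal{M}^p(\mathbb{R}^N)\subset L^1_v(\mathbb{R}^N)$ for some admissible weight $v\in\Theta$ (for $p\ge p^*$ we take $\alpha\in(N-\frac Np,N+\frac\sigma m)$, for $p<p^*$ any $\alpha$ in the admissible range), the existence result of~\cite{b3} gives a weighted solution $u$ with initial datum $u_0$. By Theorem~\ref{uniquenessthm} this weighted solution is unique. Uniqueness in the $\mathcal{M}^p$ class then comes for free: any $\mathcal{M}^p$ solution $\tilde u$ satisfies $\tilde u(t)\in\mathcal{M}^p\subset L^1_v$ with $\|\tilde u(t)\|_{L^1_v}\le c\|\tilde u(t)\|_{\mathcal{M}^p}$ by the Hardy--Littlewood computation of Section~\ref{sect:Preliminaries}. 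Hence $\tilde u\in L^1_{\rm loc}((0,\infty);L^1_v)$, so $\tilde u$ is a weighted solution with the same weight and therefore coincides with $u$. It remains to show that $u\in L^1_{\rm loc}((0,\infty);\mathcal{M}^p)$.

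To do so, we may assume $u_0\ge0$, as in the rest of the paper; in the signed case we would split $u_0=u_0^+-u_0^-$ and bound $|u|$ between the solutions with data $u_0^+$ and $-u_0^-$, using Theorem~\ref{thm:comparison}. We compare $u$ with the worst case. Since $u_0^\#(x)=u_0^*(\omega_N|x|^N)\le \|u_0\|_{\mathcal{M}^p}\omega_N^{-1/p}|x|^{-N/p}=:U_0(x)$, Theorem~\ref{thm:comparison} applied with a common weight (both data lie in $\mathcal{M}^p\subset L^1_v$) gives $z(t)\le U(t)$. Here $z$ is the solution with datum $u_0^\#$ and $U$ is the solution of Theorem~\ref{propo:selfsimilar} with $A=\|u_0\|_{\mathcal{M}^p}\omega_N^{-1/p}$. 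By Theorem~\ref{U in Mp}, $U(t)\in\mathcal{M}^p$, and hence $z(t)\in\mathcal{M}^p$ with $\|z(t)\|_{\mathcal{M}^p}\le\|U(t)\|_{\mathcal{M}^p}$. Proposition~\ref{propo:concentrationcomparison} then gives $u(t)\in\mathcal{M}^p$ with $\|u(t)\|_{\mathcal{M}^p}\le\|U(t)\|_{\mathcal{M}^p}$ for all $t>0$.

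The main remaining point is local integrability in time of $t\mapsto\|U(t)\|_{\mathcal{M}^p}$, together with measurability of $t\mapsto u(t)$ in the quasi-normed space, which follows from $u\in C([0,\infty);L^1_{\rm loc})$. For integrability we use scaling. If $p\ne p^*$, then $U(x,t)=t^{-\alpha}S(t^{-\beta}|x|)$ with $\beta=p\alpha/N$, so a change of variables gives
\[
\|U(t)\|_{\mathcal{M}^p}=t^{-\alpha+\beta N/p}\|S(|\cdot|)\|_{\mathcal{M}^p}=\|U(1)\|_{\mathcal{M}^p},
\]
which is constant in time. If $p=p^*$, the explicit formula~\eqref{sol-extinction} gives $\|U(t)\|_{\mathcal{M}^{p^*}}=c(T-t)_+^{1/(1-m)}\le A\,c'$. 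In either case the norm is bounded uniformly in $t$, so $u\in L^\infty((0,\infty);\mathcal{M}^p)$, and $u$ is an $\mathcal{M}^p$ solution. The delicate step is checking that the comparison $z\le U$ is legitimate, that is, that both solutions are weighted solutions with a common weight in $\Theta$. This holds because both data lie in $\mathcal{M}^p$, and the weight is chosen once, depending only on $p$.
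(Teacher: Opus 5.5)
Your proof is correct and takes the paper's route: existence and uniqueness come from the weighted theory (Theorem~\ref{uniquenessthm}), and $u(t)\in\mathcal{M}^p$ follows by concentration comparison with the worst case $A|x|^{-N/p}$ together with Theorem~\ref{U in Mp}. Beyond that, you spell out details the paper's one-line proof leaves implicit: that every $\mathcal{M}^p$ solution is a weighted solution (so uniqueness holds in the $\mathcal{M}^p$ class), the intermediate step $z\le U$ via Theorem~\ref{thm:comparison} needed to meet the hypothesis of Proposition~\ref{propo:concentrationcomparison}, and the time-uniform bound on $\|U(t)\|_{\mathcal{M}^p}$ from scaling (there the measurability that follows from $L^1_{\rm loc}$-continuity is that of $t\mapsto\|u(t)\|_{\mathcal{M}^p}$, which is lower semicontinuous, not Bochner measurability in the non-separable space $\mathcal{M}^p$).
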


\begin{proof}
     Existence of a unique very weak solution is already known; it only remains to prove that this  solution satisfies $u(t)\in \mathcal{M}^p(\mathbb{R}^N)$ for all $t>0$. This follows immediately by concentration comparison with the worst-case scenario $U_0(x) = A |x|^{-\frac{N}{p}}$, for some $A>0$ depending on $u_0$, and the previous result.
\end{proof}

The uniqueness result of the previous section, together with the $\mathcal{M}^p$--$L^\infty$ smoothing effect for \emph{minimal} weighted solutions proved in \cite{vazquez:Mp_SE} in the case $p>p^*$, yields the following theorem.

\begin{theorem}[Forward smoothing effect]\label{thm:Forward Mp smoothing effect}
   Let $u$ be a solution to~\eqref{eq:main} with initial datum $u_0\in 
   \mathcal{M}^p(\mathbb{R}^N)$, $p>p^*$. Then $u(t)\in L^\infty(\mathbb{R}^N)$ for any $t>0$. In particular, we have the smoothing effect $\mathcal{M}^p$--$L^q$ for every $p<q\le\infty$. 
\end{theorem}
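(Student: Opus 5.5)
The plan is to obtain $L^\infty$ bounds by concentration comparison with the worst-case self-similar solution, and then to use uniqueness to transfer the bound to \emph{every} solution. Concretely, let $u_0\in\mathcal{M}^p(\mathbb{R}^N)$ with $p>p^*$, $u_0\ge0$, and set $A:=\|u_0\|_{\mathcal{M}^p}$. By \eqref{weaknorm}, $u_0^*(s)\le A s^{-\frac1p}$, so the rearrangement is dominated pointwise by the most concentrated function in $\mathcal{M}^p$:
\[
u_0^\#(x)\le c_N A|x|^{-\frac Np}=:U_0(x).
\]
Since $p>p^*$, Theorem~\ref{propo:selfsimilar} gives that the solution $U$ with datum $U_0$ is self-similar, $U(x,t)=t^{-\alpha}S(t^{-\beta}|x|)$ with $\alpha,\beta>0$. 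Its profile $S$ is bounded; this boundedness is exactly the content taken from \cite{vazquez:Mp_SE}, now valid for the unique solution thanks to Theorem~\ref{uniquenessthm}. Hence
\[
\|U(t)\|_\infty=t^{-\alpha}\|S\|_\infty<\infty\qquad\text{for every } t>0.
\]

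Next I compare the solution $z$ with datum $u_0^\#$ to $U$. All three data $u_0$, $u_0^\#$ and $U_0$ lie in $L^1_v(\mathbb{R}^N)$ for a common weight $v=v_\alpha$ with $\alpha\in(N-\frac Np,N+\frac\sigma m)$, as explained in Section~\ref{sect:Preliminaries}. Theorem~\ref{thm:comparison} therefore applies and gives $z(t)\le U(t)$, so $z(t)\in L^\infty(\mathbb{R}^N)$. I then apply Proposition~\ref{propo:concentrationcomparison} with exponent $\infty$: we get $u^\#(t)\prec z(t)$ and $\|u(t)\|_\infty\le\|z(t)\|_\infty\le c\,t^{-\alpha}\|u_0\|_{\mathcal{M}^p}^{\gamma}$, with the power $\gamma$ fixed by scaling.

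The role of uniqueness is essential here. The solution $u$ in the statement is any $\mathcal{M}^p$ (hence weighted) solution, and by Theorem~\ref{uniquenessthm} it coincides with the monotone-limit solution used inside the concentration comparison. This rules out a larger unbounded solution, which was precisely the gap left in \cite{vazquez:Mp_SE}.

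The $\mathcal{M}^p$--$L^q$ statement follows by interpolation. We have $u(t)\in\mathcal{M}^p$ by Theorem~\ref{solution in Mp}, and $u(t)\in L^\infty$ by the argument above. For $p<q<\infty$, use $u^*(s)\le\min\{\|u(t)\|_\infty,\ \|u(t)\|_{\mathcal{M}^p}s^{-\frac1p}\}$; then
\[
\int_0^\infty (u^*)^q\,ds<\infty,
\]
because $q/p>1$ makes the tail integrable.

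The main obstacle I expect is checking the hypotheses of the comparison steps in Marcinkiewicz spaces. First, the limit passage in Proposition~\ref{propo:concentrationcomparison} with $p=\infty$ needs a.e. monotone convergence so that $\prec$ yields the sup bound; taking $R\to0$ in the ball averages, together with radial monotonicity, should give it. Second, I need to confirm that the boundedness of $S$ in \cite{vazquez:Mp_SE} does not rely on minimality beyond what uniqueness now supplies.
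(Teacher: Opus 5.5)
Your argument is correct and is essentially the paper's proof. The paper combines uniqueness (Theorem~\ref{uniquenessthm}) with the $\mathcal{M}^p$--$L^\infty$ smoothing effect that \cite{vazquez:Mp_SE} proves for minimal solutions. You instead unpack that cited result using the paper's own tools: the domination $u_0^{\#}\le cA|x|^{-N/p}$, Theorem~\ref{thm:comparison}, Proposition~\ref{propo:concentrationcomparison} with $p=\infty$, and the boundedness of the self-similar profile recorded in Theorem~\ref{propo:selfsimilar}. Your only external input is therefore the boundedness of $S$.

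Both worries you raise are already settled. Proposition~\ref{propo:concentrationcomparison} is stated for $p=\infty$, since $h\prec g$ for rearranged functions gives $\|h\|_\infty\le\|g\|_\infty$. Uniqueness identifies $U$ with the minimal solution of \cite{vazquez:Mp_SE}, so its boundedness transfers directly, exactly as in the proof of Theorem~\ref{propo:selfsimilar}.
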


Next we characterize the extinction spaces.

\begin{theorem}\label{extinction:Lp and Mp}
    \textup{(i)} $\mathcal{M}^{p^*}(\mathbb{R}^N)$ is an extinction space for problem~\eqref{eq:main}.
    
    \noindent \textup{(ii)} For any $p\in [1,\infty]$, $p\neq p^*$, there are $L^p$ solutions to~\eqref{eq:main} that never become extinct.
\end{theorem}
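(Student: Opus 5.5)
For part (i) we will argue by concentration comparison with the explicit separated-variables solution of Theorem~\ref{propo:selfsimilar}. Take $0\le u_0\in\mathcal{M}^{p^*}(\mathbb{R}^N)$; the signed case follows by comparing $u$ with the solutions issued from $u_0^\pm$ via Theorem~\ref{thm:comparison}, or simply by the standing restriction to nonnegative data. Since $u_0^*(s)\le \|u_0\|_{\mathcal{M}^{p^*}}s^{-1/p^*}$, we have $u_0^\#(x)\le A|x|^{-N/p^*}=A|x|^{-\frac{\sigma}{1-m}}=:U_0(x)$ with $A=\|u_0\|_{\mathcal{M}^{p^*}}\omega_N^{-1/p^*}$. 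Let $z$ be the solution with datum $u_0^\#$ and $U$ the explicit solution \eqref{sol-extinction} with $cT^{\frac1{1-m}}=A$. All these data lie in $L^1_v$ for a common admissible weight, namely any $\alpha\in(N-\frac N{p^*},N+\frac\sigma m)$. Theorem~\ref{thm:comparison} therefore gives $0\le z\le U$, so $z(t)\equiv0$ for $t\ge T$.

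Next we apply Proposition~\ref{propo:concentrationcomparison} with $p=\infty$. Here $z(t)\in L^\infty$ fails for $t<T$, which is harmless since the hypothesis is only used to conclude norm bounds. It is better to use the ordering \eqref{eq:concentration.ordering} directly: its proof only uses monotone approximation and uniqueness, not the integrability hypothesis. This gives $\int_{B_R}u^\#(t)\le\int_{B_R}z(t)=0$ for all $R$ and $t\ge T$. Hence $u(t)\equiv0$ for $t\ge T$, with $T=(A/c)^{1-m}$ depending only on $\|u_0\|_{\mathcal{M}^{p^*}}$. Extinction then persists for later times, either by uniqueness or by the same comparison. The main point to check is that the ordering statement in Proposition~\ref{propo:concentrationcomparison} holds without the $L^p$ hypothesis. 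I expect this to go through because the proof never invokes it before the final norm inequality.

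For part (ii) we need, for each $p\ne p^*$, an $L^p$ solution that is positive for all times. The natural candidates are the self-similar solutions $U$ of Theorem~\ref{propo:selfsimilar}, which are positive for all times, but their data $A|x|^{-N/p}$ lie in $\mathcal{M}^p\setminus L^p$. We therefore use comparison from below. Take a compactly supported truncation $u_0=\min\{U_0(x),k\}\chi_{B_1}$, or a tail truncation, which is in $L^p$.

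For $p<p^*$ the idea is to use the backward behaviour. Take datum $A|x|^{-N/q}$ with $q<p^*$ so close to... Instead, choose $u_0=A|x|^{-N/q}\chi_{B_1}$ with $p<q<p^*$ when $p<p^*$; this is in $L^p$ locally but not at infinity. Better: take $u_0= A\min\{|x|^{-N/q_1},|x|^{-N/q_2}\}$ with $q_1<p<q_2$, which lies in $L^p$. When $p<p^*$, choose $q_2\in(p,p^*)$ and compare from below with the solution having datum $A|x|^{-N/q_2}\chi$... Comparison from below requires a positive subsolution. For $p>p^*$, including $p=\infty$, the clean choice is to bound from below by a translate/scaling of $U$ with exponent $q\in(p^*,p)$, whose datum lies below $u_0=\max$-type data in $L^p$, if $q<p$ and the data are truncated near the origin ($A\min\{k,|x|^{-N/q}\}\le A|x|^{-N/q}$ goes the wrong way). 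Hence the essential obstacle in (ii) is producing positive lower barriers with $L^p$ data. I would resolve it as follows. For $p>p^*$, positivity for all times follows because $L^p$ data are not in $\mathcal{M}^{p^*}$-extinction regime: use the $L^p$ datum $u_0=\min\{1,|x|^{-N/q}\}$ with $p^*<q<p$, and show non-extinction via the $L^1_v$ contraction \eqref{weight+} run backwards, since extinction at $T$ would force $\int u_0 v(\cdot/R)\le (CT)^{\frac1{1-m}}R^{-\frac{\sigma-N(1-m)}{1-m}}$ for all $R$, and the left side grows like $R^{N-N/q}$ while the right side grows like $R^{N-\sigma/(1-m)}$, a contradiction since $N/q<N/p^*$. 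For $p<p^*$ (including $p=1$ when $m<m_c$) take $u_0=|x|^{-N/q}\chi_{B_1}$ with $p<q<p^*$, and compare from above with the non-extinction... Here I would compare from below with the self-similar solution for exponent $q$ restricted locally, using its singular behaviour $U\sim|x|^{-N/q}$ at the origin, which persists for all $t$. This persistence is local, and I would prove it by a local comparison/Silvestre-type argument. I expect this to be the hardest step.
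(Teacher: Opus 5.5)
Part (i) is the paper's argument, done more carefully. You insert the pointwise comparison $z\le U$ (Theorem~\ref{thm:comparison}) between concentration comparison and the explicit solution \eqref{sol-extinction}. You are also right that the ordering \eqref{eq:concentration.ordering} does not use the integrability hypothesis. In any case that hypothesis holds with $p=p^*$, since $0\le z(t)\le U(t)\in\mathcal{M}^{p^*}$.

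In (ii) for $p>p^*$ your route differs from the paper's. The paper takes the self-similar solution $U$ with datum $|x|^{-N/r}$, $r\in(p^*,p)$. Then $U(\tau)$ is bounded and behaves like $|x|^{-N/r}$ at infinity, so it lies in $L^p$, and $U(\cdot+\tau)$ is positive for all times. Your lower bound on the extinction time is sound, but it does not follow from \eqref{weight+}: that inequality is one-sided in time ($\tau\le t$) and concerns $(u-\hat u)_+$, so it cannot be "run backwards". What you need is the two-sided estimate $|I_R(t)^{1-m}-I_R(s)^{1-m}|\le C|t-s|R^{N(1-m)-\sigma}$ for $I_R(t)=\int_{\mathbb{R}^N}u(t)v(x/R)\,dx$. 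It holds for a single nonnegative weighted solution. Testing with $v(\cdot/R)$ gives $I_R(t)-I_R(s)=-\int_s^t\int_{\mathbb{R}^N}u^m(-\Delta)^{\frac\sigma2}v(\cdot/R)$. H\"older as in \eqref{eq:bound.phi.v} bounds its absolute value by $CR^{N(1-m)-\sigma}\int_s^t I_R^m$, and Fatou handles $s\to0$. With that, your computation is correct and gives a quantitative non-extinction criterion without using self-similar solutions.

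The genuine gap is $p<p^*$, where you give no proof.
\begin{itemize}
\item Comparison from below with the self-similar solution goes the wrong way: $u_0=|x|^{-N/q}\chi_{B_1}\le |x|^{-N/q}$ only yields $u\le U$.
\item A local comparison on $B_1$ is not available for the nonlocal operator unless the functions are also ordered in the exterior, and that fails here.
\end{itemize}
So the persistence of the singularity $u\sim|x|^{-N/q}$ at the origin, which you yourself flag as the hardest step, is unsupported.

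The missing idea is the one you set aside at the start. The data $A|x|^{-N/p}$ are not in $L^p$, but nothing forces you to use the exponent $p$ or to start at $t=0$. Take $U$ with datum $|x|^{-N/r}$, $r\in(p,p^*)$. By the backward smoothing effect (Theorem~\ref{backthm}) together with Theorem~\ref{U in Mp}, $U(\tau)\in L^1\cap\mathcal{M}^r\subset L^p$ for every $\tau>0$. By uniqueness, $U(\cdot+\tau)$ is the $L^p$ solution with datum $U(\tau)$, and it is positive for all times by Theorem~\ref{propo:selfsimilar}. This is what the paper does.

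Alternatively, your own two-sided weighted estimate closes this case if you let $R\to0$ instead of $R\to\infty$. For $u_0=|x|^{-N/q}\chi_{B_1}$ with $p<q<p^*$, one has $\int u_0v(\cdot/R)\sim R^{N-\frac Nq}$ as $R\to0$. Extinction at time $T$ would force this to be at most $(CT)^{\frac1{1-m}}R^{N-\frac{\sigma}{1-m}}$. That is impossible, because $N-\frac Nq<N-\frac{\sigma}{1-m}$ when $q<p^*$.
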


\begin{proof} (i) The explicit $\mathcal{M}^{p^*}$ solution $U$ with initial datum $U(0)=A|x|^{-\frac N{p^*}}$ belongs to $\mathcal{M}^{p^*}(\mathbb{R}^N)$ for any $t>0$ and vanishes in a finite time $T_A=cA^{1-m}$; see~\eqref{sol-extinction}. We can therefore deduce the result by concentration comparison, Proposition~\ref{propo:concentrationcomparison}: given $u_0\in\mathcal{M}^{p}(\mathbb{R}^N)$, take $A=\|u_0\|_{\mathcal{M}^p}$, and the solution vanishes in a time not bigger than $T_A$.

\noindent (ii) \underline{Case $p>p^*$.} If we pick an exponent $r\in(p^*,p)$ and the solution corresponding to the power $U_0(x)=|x|^{-\frac Nr}$, we have by Theorem~\ref{propo:selfsimilar}, that $U(t) \in L^{p}(\mathbb{R}^N)$. Then, by considering the time-shifted solution $u(t) = U(t+\tau)$ with $\tau > 0$, we have that $u(0) \in L^{p}(\mathbb{R}^N)$ and is positive for all times. It follows that $L^{p}(\mathbb{R}^N)$ is not an extinction space.

\noindent\underline{Case $p<p^*$.} The argument is similar, now using the backward  $\mathcal{M}^p-L^1$ smoothing effect for $p<p^*$ (Theorem~\ref{backthm}, proved in the next section), which guarantees that the time-shifted solution $u(t)=U(t+\tau)$ has initial datum in $L^p(\mathbb{R}^N)$.
\end{proof}

%%%%%%%%%%%%%%%%%%%%%%%%%%%%%%%%%%%%%%%%%%%%%%%%%%%%%%%%%%%%%%%%%%%%%%%%%
\section{Backward smoothing effects}\label{sec:positives results}

In this section we investigate the possibility of having backward smoothing effects, meaning that a solution with an initial datum in some $L^p$, enters instantaneously in some $L^q$ with $q\in[1,p)$. We  show that an $L^p$--$L^1$ backward smoothing effect holds when $p<p^*$ (then necessarily $p^*>1$, i.e., $m<m_c$). Notice that this implies an $L^p$--$L^q$ backward smoothing effect for any $q\in[1,p)$. In fact we can do better, proving the smoothing effect for initial data in the Marcinkiewicz space $\mathcal{M}^p$,~$p\in(1,p^*)$. The corresponding result in the local case was proved in~\cite[Theorem~5.15]{smoothbook}.  

\begin{theorem}[Backward smoothing effect]\label{backthm}
   Let $u_0\in\mathcal{M}^p(\mathbb{R}^N)$, $1<p<p^*$, then the corresponding $\mathcal{M}^p$ solution to~\eqref{eq:main} is integrable in space for any $t>0$. We then have the smoothing effect  $\mathcal{M}^p$--$L^q$ for every $1\le q<p$.
\end{theorem}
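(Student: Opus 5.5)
The plan is to reduce, via concentration comparison (Proposition~\ref{propo:concentrationcomparison}), to the worst-case datum $U_0(x)=A|x|^{-\frac Np}$ with $A=\|u_0\|_{\mathcal{M}^p}$. Since $u_0^\#\le U_0$ pointwise (because $u_0^*(s)\le \|u_0\|_{\mathcal{M}^p}s^{-1/p}$), the comparison principle Theorem~\ref{thm:comparison} gives that the solution $z$ with datum $u_0^\#$ satisfies $z\le U$. Hence it suffices to show $U(t)\in L^1(\mathbb{R}^N)$ for every $t>0$. Then $z(t)\in L^1$, and Proposition~\ref{propo:concentrationcomparison} with $p=1$ yields $u(t)\in L^1$ with $\|u(t)\|_1\le \|U(t)\|_1$. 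Since $u(t)\in\mathcal{M}^p$ by Theorem~\ref{solution in Mp}, interpolation between $L^1$ and $\mathcal{M}^p$ gives $u(t)\in L^q$ for every $1\le q<p$. Concretely, split $\int |u|^q$ using $u^*(s)\le Cs^{-1/p}$ for large values and $u^q\le \lambda^{q-1}u$ below a level $\lambda$.

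For $U(t)\in L^1$, I would split $\mathbb{R}^N$ into a ball $B_R$ and its complement. Near the origin, Theorem~\ref{propo:selfsimilar} gives $U(x,t)\sim |x|^{-\frac Np}$, which is integrable at $0$ because $p>1$. Away from the origin, $U(t)$ is locally bounded by the same self-similar description: $S$ is continuous away from $0$, or one can use the bound of Theorem~\ref{thm global bound} on annuli. At infinity I use the global bound \eqref{U:estimate}, $U(x,t)\le c\,t^{-1/m}|x|^{-\frac{N-p\sigma}{pm}}$. This is integrable at infinity iff $\frac{N-p\sigma}{pm}>N$, i.e. $N-p\sigma>pmN$, i.e. $p<\frac{N}{\sigma+mN}$.

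Here is the main obstacle. One checks that $\frac{N}{\sigma+mN}<p^*=\frac{N(1-m)}{\sigma}$, so \eqref{U:estimate} alone covers only part of the range $1<p<p^*$, and the remark after Theorem~\ref{thm global bound} already says it is not expected to be optimal. To cover the full range I would iterate the argument of Theorem~\ref{thm global bound}. Knowing a decay $U\le c(t)|x|^{-\gamma_0}$ with $\gamma_0=\frac{N-p\sigma}{pm}$, I would bound $(-\Delta)^{\frac\sigma2}w\le U_0$ by comparing on the exterior with Riesz potentials of the tail. An alternative is to use mass: obtain an $L^1$ bound via the weighted estimate \eqref{weight+} with weight $v(x/R)$ and $\alpha$ close to $0$ (any $\alpha>N-\frac{\sigma}{1-m}$ is allowed, and $p<p^*$ permits $\alpha<0$ is not allowed, but $\alpha$ slightly above $N-\frac{\sigma}{1-m}<N(1-\frac1p)$, wait: rather any decay admitted since $p<p^*$).

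The cleanest route I would try first is to test \eqref{weight+} with $\hat u=0$ but starting at a positive time $\tau$. By self-similarity $U(t)=t^{-\alpha}S(t^{-\beta}|x|)$ with $\alpha,\beta<0$, so finiteness of $\|U(t)\|_1$ is equivalent to $\int S\,\rho^{N-1}d\rho<\infty$. I would then show $\int_{B_R}U(t)$ stays bounded as $R\to\infty$, using the decay estimate \eqref{weight+} with $v\equiv\chi$-type weights and $R\to\infty$. The exponent $\sigma-N(1-m)<0$ appears there, which is exactly the very fast regime, and makes the relevant term $R^{N(1-m)-\sigma}$ grow. So the nontrivial step is extracting the decay of $S$ beyond $|x|^{-N}$. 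I expect to need a bootstrap in the exponent $\gamma$: each pass through the Riesz-potential argument of Theorem~\ref{thm global bound}, applied to $(-\Delta)^{\frac\sigma2}w\le U_0\chi_{B_1}+\dots$, should improve $\gamma$ to $\gamma' = (\gamma-\sigma)/m$ or similar, until it exceeds $N$; the improvement is strictly increasing precisely when $p<p^*$.
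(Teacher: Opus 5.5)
Your reduction to the worst case $U$ with $U_0=A|x|^{-N/p}$ (via $u_0^\#\le U_0$, Theorem~\ref{thm:comparison} and Proposition~\ref{propo:concentrationcomparison}) is correct and agrees with the paper, and so are the interpolation step and the case $1<p<p_m:=\frac{N}{mN+\sigma}$ via \eqref{U:estimate}. The gap is the range $p_m\le p<p^*$, which is the real content of the theorem. None of the routes you list is carried out, and the ones that are specific enough do not work as stated.

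\begin{itemize}
\item The weighted estimate \eqref{weight+} cannot give an $L^1$ bound. Both $R^{N(1-m)-\sigma}$ and $\int U_0\,v(\cdot/R)$ grow with $R$, since $U_0\notin L^1$.
\item Your bootstrap runs on the inequality $(-\Delta)^{\frac\sigma2}\int_0^tU^m\le U_0$, which only sees the initial datum. Its tail is exactly $A|x|^{-N/p}$. So the Riesz-potential argument can never give better than $\int_0^tU^m\lesssim|x|^{\sigma-N/p}$, whatever you know about $U(s)$ for $s>0$; the bound $U^m(s)\le cs^{-1}|x|^{-m\gamma_0}$ is not integrable at $s=0$.
\item Theorem~\ref{thm global bound} is proved only for a pure-power datum. ``The same argument'' for a non-power datum would require redoing the Silvestre limit $R\to\infty$ and the Riesz-potential asymptotics.
\end{itemize}

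The missing idea is to restart the evolution at a positive time and iterate on the Marcinkiewicz exponent. Uniqueness and concentration comparison then return you to a pure power at each step.

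\begin{itemize}
\item Suppose $q<p$ and $q\,\frac{N-p\sigma}{pm}\ge N$, i.e.\ $p\le\frac{Nq}{mN+\sigma q}$. Then \eqref{U:estimate}, together with $U(x,t)\sim|x|^{-N/p}$ near the origin, gives $U(t)\in\mathcal{M}^q(\mathbb{R}^N)$.
\item By uniqueness, $s\mapsto U(t+s)$ is the $\mathcal{M}^q$ solution with datum $U(t)$.
\item Proposition~\ref{propo:concentrationcomparison} controls it by the worst case with datum $A'|x|^{-N/q}$. Theorem~\ref{thm global bound} applies to that worst case with $q$ in place of $p$.
\item Set $q_1=p_m$ and $q_{k+1}=\frac{Nq_k}{mN+\sigma q_k}$; this sequence increases to $p^*$. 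Finitely many steps therefore bring you to $\mathcal{M}^{p_m}$.
\item At $p_m$ the tail bound is $|x|^{-N}$, still not integrable. One more step into $\mathcal{M}^q$ with $q<p_m$ is needed; your ``until it exceeds $N$'' glosses over this equality case.
\end{itemize}

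Your exponent map $\gamma\mapsto(\gamma-\sigma)/m$, written in terms of $q=N/\gamma$, is exactly the inverse of this recursion, so your bookkeeping was right. What is missing is the mechanism that makes each step rigorous: time shift, uniqueness, and concentration comparison.
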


\begin{proof}
By concentration comparison (Proposition~\ref{propo:concentrationcomparison}), it suffices to
prove the result for the worst-case solution $U$ with initial datum $U_0(x)=A|x|^{-\frac{N}{p}}$. The proof depends on the sign of $p-p_m$, with $p_m:=N/(mN+\sigma)$.

\noindent\underline{Case $p \in (1, p_m)$.}  The exponent $\frac{N-p\sigma}{pm}$ in estimate~\eqref{U:estimate} exceeds $N$ if and only if $p<p_m$, so $U(t)\in L^1(B_1^c)$ if $p$ is in that range. Since $U(t)\in L^1_{\textup{loc}}(\mathbb{R}^N)$ always holds, we conclude $U(t)\in L^1(\mathbb{R}^N)$ when $p<p_m$.

\noindent\underline{Case $p =p_m$.} In this case~\eqref{U:estimate} becomes $U(x,t)\le
c\,t^{-\frac{1}{m}}|x|^{-N}$, whence $U(t)\in L^q(B_1^c)$ for every $q>1$. Combined with the behaviour at the origin, $U(x,t)\sim|x|^{-\frac{N}{p_m}}$ (Theorem~\ref{propo:selfsimilar}; recall that $p<p^*$), we get $U(t)\in\mathcal{M}^q(\mathbb{R}^N)$ for every $q\in(1,p_m)$, and we then conclude by the case $p<p_m$ already proved.

\noindent\underline{Case $p\in(p_m,p^*)$.} In this range we use an iterative argument. 
We want to define an increasing sequence
\[
    q_{k+1}=F(q_k), \qquad q_1=p_m,\quad \lim_{k\to\infty}q_k=p^*.
\]
We will show that if $U_0\in\mathcal{M}^p(\mathbb{R}^N)$ with $p\in(q_k,q_{k+1}]$  for some $k$ (which always holds if $p\in(p_m,p^*)$), then $U(t)\in\mathcal{M}^{q_k}(\mathbb{R}^N)$,
so that after $k$ iterations we obtain $U(t)\in\mathcal{M}^{q_1}(\mathbb{R}^N)
=\mathcal{M}^{p_m}(\mathbb{R}^N)$, and we conclude by the case $p=p_m$ already proved.

Let $p\in(q_k,q_{k+1}]$. Since $p>q_k$, then $\mathcal{M}^{p}(B_R)\subset\mathcal{M}^{q_k}(B_R)$ for all $R>0$. Hence, it suffices to control the $\mathcal{M}^{q_k}$-norm at infinity. Using~\eqref{U:estimate},
\begin{equation}\label{eq:q_k}
    \|U(t)\|_{\mathcal{M}^{q_k}(B_R^c)}
    \le c\,t^{-\frac{1}{m}}
    \sup_{\lambda>0}\lambda\,\bigl|\bigl\{|x|>R:|x|^{-\frac{N-p\sigma}{pm}}>\lambda\bigr\}\bigr|^{\frac{1}{q_k}}.
\end{equation}
The right-hand side is finite provided $q_k\, \frac{N-p\sigma}{pm}\ge N$, that is, if $p\le\frac{Nq_k}{mN+\sigma q_k}$, and in that case we conclude that $U(t)\in\mathcal{M}^{q_k}(\mathbb{R}^N)$. Thus, it is enough to define $q_k$ by the nonlinear recurrence $q_{k+1}=\frac{Nq_k}{mN+\sigma q_k}$. It can be made linear by the change $z_k=\frac 1{q_k}$, which is easily solved and gives the desired sequence,
\[
    q_k=\frac{N}{\mu m^{k}+\nu}, \qquad \nu=\frac{\sigma}{1-m},\quad \mu=N-\nu>0.
\]

The $\mathcal{M}^p$--$L^q$ effect for $q\in(1,p)$  follows by interpolation.
\end{proof}

%%%%%%%%%%%%%%%%%%%%%%%%%%%%%%%%%%%%%%%%%%%%%%%%%%%%%%%%%%%%%%%%%%%%%%%%%
\section{Failure of the smoothing effects}\label{sec:failure of se}

The purpose of this section is twofold: to construct counterexamples showing the failure of the $L^p$--$\mathcal{M }^q$ smoothing effect for problem~\eqref{eq:main} when $1\le p < p^*$ and $q>p$; and to construct counterexamples showing the failure of the $L^p$--$\mathcal{M }^q$ backward smoothing effect when $p > p^*>1$ and $q<p$. In those results  $m<m_c$. If $m=m_c$, for which $p^*=1$, we show the failure of the smoothing $L^1$--$\mathcal{M }^q$, $q>1$.

\begin{theorem}\label{failure of $L^p$--$L^q$}
    \noindent\textup{(i)} Let  $m\in(0,m_c)$ and $p\in[1, p^*)$. There is no $L^p$--$\mathcal{M }^q$ forward smoothing effect for any $q\in(p,\infty)$. Moreover, there is no $L^p$--$L^\infty$ forward smoothing effect.

    \noindent\textup{(ii)} Let  $m\in(0,m_c)$ and $p\in(p^*,\infty)$. There is no $L^p$--$\mathcal{M}^q$ backward smoothing effect for any $q\in(1,p)$. Moreover, there is no $L^p$--$L^1$ backward smoothing effect.

    \noindent\textup{(iii)} Let  $m=m_c$. There is no $L^1$--$\mathcal{M }^q$ smoothing effect for any $q>1$.
\end{theorem}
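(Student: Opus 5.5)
The plan is to build explicit counterexamples from the self-similar solutions of Theorem~\ref{propo:selfsimilar}, shifted in time. Uniqueness of weighted solutions (Theorem~\ref{uniquenessthm}) guarantees that a shifted solution is \emph{the} $L^p$ solution of Theorem~\ref{solution in Lp} with the shifted datum. Concretely, let $U^{(r)}$ be the solution with datum $|x|^{-N/r}$ and put $u(t)=U^{(r)}(t+\tau)$ for a fixed $\tau>0$. The first task is to show $u(0)=U^{(r)}(\tau)\in L^p$. The second is to show $u(t)\notin\mathcal{M}^q$ (and $u(t)\notin L^\infty$, respectively $\notin L^1$) for every $t>0$.

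For (i), with $p<p^*$, choose $r\in(p,\min\{q,p^*\})$, which is possible since $r>p\ge1$. By Theorem~\ref{propo:selfsimilar}, $U^{(r)}(t)\sim|x|^{-N/r}$ near the origin for every $t>0$. Hence $U^{(r)}(\tau)\in L^p(B_1)$, because $Np/r<N$. At infinity, Theorem~\ref{backthm} applied to $r<p^*$ gives $U^{(r)}(\tau)\in L^1$. Together with Theorem~\ref{U in Mp} this yields $U^{(r)}(\tau)\in L^1\cap\mathcal{M}^r\subset L^p(B_1^c)$, so $u(0)\in L^p$. On the other hand, the singularity $|x|^{-N/r}$ at the origin persists for all $t>0$. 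Since $r<q$, it follows that $u(t)\notin\mathcal{M}^q$, and in particular $u(t)\notin L^\infty$.

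For (ii), with $p>p^*$, choose $r\in(\max\{q,p^*\},p)$. Theorem~\ref{propo:selfsimilar} gives that $U^{(r)}(t)$ is bounded and $\sim|x|^{-N/r}$ at infinity. Since $p>r$, this makes $U^{(r)}(\tau)\in L^\infty$ with decay in $L^p(B_1^c)$, hence $u(0)\in L^p$. The tail $|x|^{-N/r}$ persists for all $t$, and it is not in $\mathcal{M}^q(B_1^c)$ for $q<r$, nor in $L^1$. This settles (ii).

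Part (iii) is the real obstacle. When $m=m_c$ we have $p^*=1$, so every self-similar solution with $r>1$ is bounded and the device above is unavailable. I would use the scaling specific to $m=m_c$: since $N(m-1)+\sigma=0$, the map $u\mapsto\lambda^N u(\lambda x,t)$ sends solutions to solutions with the \emph{same} time variable and preserves the $L^1$ norm. I would build an $L^1$ datum concentrating at the origin, for instance $u_0=\sum_k a_k\lambda_k^N\varphi(\lambda_k x)$ with $\sum a_k<\infty$, or a profile such as $|x|^{-N}(\log(1/|x|))^{-\gamma}$ with $\gamma>1$. Then I would bound $u(t)$ from below using Theorem~\ref{thm:comparison} against the solution generated by each single piece. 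By the scaling, the $k$-th piece contributes at least $\lambda_k^N\delta_k(t)$ on a ball of radius $\sim\lambda_k^{-1}$. This gives the lower bound $\|u(t)\|_{\mathcal{M}^q}\ge c\,\delta_k(t)\lambda_k^{N(1-1/q)}$, and the goal is to choose $\lambda_k\to\infty$ fast enough that this diverges.

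The difficulty is that the datum $a_k\varphi$ becomes extinct at a time of order $a_k^{1-m}\to0$ (Theorem~\ref{extinction:Lp and Mp}). Isolated bumps therefore die too fast, and the lower bound $\delta_k(t)$ degenerates. I would try to overcome this by nesting the bumps at the origin, following the comparison idea of \cite{chasvaz}. The cumulative mass inside $B_{1/\lambda_k}$ should then behave like a truncation of $A_k|x|^{-N}$, and the separable solution \eqref{sol-extinction}, restricted to an annulus, should serve as a subsolution that survives a fixed time $t$ while remaining too concentrated to lie in $\mathcal{M}^q$.
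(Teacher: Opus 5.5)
Parts (i) and (ii) are correct and follow the paper's argument. You time-shift the self-similar solution with datum $|x|^{-N/r}$, with $r\in(p,\min\{p^*,q\})$ in (i) and $r\in(\max\{p^*,q\},p)$ in (ii). You then use the persistent singularity at the origin in (i) and the persistent tail at infinity in (ii). Membership $U(\tau)\in L^p$ comes from Theorem~\ref{backthm} in (i) and from boundedness (Theorem~\ref{thm:Forward Mp smoothing effect}) in (ii). Uniqueness makes the shifted function the $L^p$ solution.

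Part (iii), however, is not proved. You set up exactly the construction the paper uses, then abandon it because of an obstacle that does not exist at $m=m_c$.

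\begin{itemize}
\item Theorem~\ref{extinction:Lp and Mp}(i) concerns $\mathcal{M}^{p^*}$ with $p^*>1$. It rests on the separable solution \eqref{sol-extinction}, and Theorem~\ref{propo:selfsimilar} requires $p>1$. For $m=m_c$ that solution is $c(T-t)^{\frac1{1-m}}|x|^{-N}$, which is not even locally integrable.
\item Nothing in the paper says the solution with datum $a_k\varphi$ vanishes before a fixed $t_0$. On the contrary, the remark after the proof names finite-time extinction as the reason the construction fails only when $m<m_c$.
\item The rate at which $\delta_k(t_0)$ degenerates is irrelevant. The mass-preserving scaling leaves $\sum a_k$ unchanged, so $\lambda_k$ can be chosen after $\delta_k(t_0)$ is known. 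All you need is $\delta_k(t_0)>0$.
\end{itemize}

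This is precisely what the paper does. Let $u_n$ be the solution with datum $\varphi_1(n^{2/N}x)$, which has mass $n^{-2}$ and sup $1$. Set $k_n=(n/\|u_n(t_0)\|_\infty)^{1/N}$ and $\eta_n(x,t)=k_n^Nu_n(k_nx,t)$. Then $\|\eta_n(0)\|_1=n^{-2}$ and $\|\eta_n(t_0)\|_\infty=n$. The datum $\sum_n\eta_n(0)$ lies in $L^1$, and Theorem~\ref{thm:comparison} gives $\|u(t_0)\|_\infty=\infty$.

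Failure of the $L^1$--$\mathcal{M}^q$ effect then follows from Theorem~\ref{thm:Forward Mp smoothing effect} applied to the time-shifted solution. If $u(t_1)\in\mathcal{M}^q$ for some $t_1<t_0$ and some $q>1=p^*$, then $u(t_0)\in L^\infty$, which is a contradiction. Hence $u(t)\notin\mathcal{M}^q$ on all of $(0,t_0)$.

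The replacement you propose does not work in this framework. You want to use the separable solution, restricted to an annulus, as a subsolution. But at $m=m_c$ that function is not in $L^1_{\rm loc}$. Truncating it also changes its fractional Laplacian nonlocally. Finally, the only comparison result available, Theorem~\ref{thm:comparison}, is between weighted very weak solutions, not sub- and supersolutions. The logarithmic profile $|x|^{-N}(\log(1/|x|))^{-\gamma}$ would likewise need the singularity to persist for $t>0$, and nothing available gives that.
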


\begin{proof}

\noindent\textup{(i)} Let $U$ be the self-similar solution with initial datum the power $U_0(x)=|x|^{-\frac{N}{r}}$ with $r\in(p,\min\{p^*,q\})$. By Theorem~\ref{propo:selfsimilar},  $U(x,t)\sim|x|^{-\frac{N}{r}}$ at the origin for all $t>0$, whence $U(t)\not\in \mathcal{M}^q(\mathbb{R}^N)$ for any $t>0$; besides, $U(t)\not\in L^\infty(\mathbb{R}^N)$. On the other hand, by the backward smoothing effect, Theorem~\ref{backthm}, $U(\tau)\in L^p(\mathbb{R}^N)$ for all $\tau>0$. The function $u(t)=U(t+\tau)$ gives thus the desired counterexample.

\noindent\textup{(ii)}
The proof is analogous to previous the case, taking $r\in (\max\{p^*,q\},p)$, using  the behavior of the corresponding self-similar solution $U$ at infinity instead of at the origin, Theorem~\ref{propo:selfsimilar}, and the forward smoothing effect, Theorem~\ref{thm:Forward Mp smoothing effect}.

\noindent\textup{(iii)} We follow the construction in \cite{chasvaz} for the local case. Let $\varphi_1$ be a smooth, nonnegative, compactly supported function, such that $\|\varphi_1\|_{1}=\|\varphi_1\|_\infty=1$. The rescaled sequence
\[
    \varphi_n(x)=\varphi_1(n^bx),\qquad b=\frac{2}N,
\]
satisfies $\|\varphi_n\|_{1} = n^{-2}$, $\|\varphi_n\|_\infty=1$. Let \( u_n \) be the solution to problem~\eqref{eq:main} with initial datum $u_0=\varphi_n$. By comparison, $ \|u_n(t)\|_{\infty}\leq  1$. Let $t_0>0$ fixed. For each  $n\in\mathbb{N}$ we define
$$
    k_n=\left(\frac{n}{\|u_n(t_0)\|_\infty}\right)^\frac1N.
$$
By the scaling properties of the fractional Laplacian,  the scaled functions
\begin{equation*}\label{eq:star}
    \eta_n (x,t)= {k_n^{N}}u_n(k_nx,t)
\end{equation*}
are solutions to the equation in~\eqref{eq:main}. They satisfy $\|\eta_n(0)\|_{1}=\|\varphi_n\|_{1}=n^{-2}$, $\| \eta_n(t_0) \|_{\infty} = n$.
Let $u$ be the solution corresponding to~\eqref{eq:main} with initial value
$$
     u(x,0) = \sum_{n=1}^{\infty} \eta_n(x,0).
$$
Clearly this function belongs to $L^{1}$:
$$
   \| u(0)\|_{1}\le \sum_{n=1}^{\infty}\|\eta_n(0)\|_{1}=\sum_{n=1}^{\infty}\|\varphi_n\|_{1}= \sum_{n=1}^{\infty}\frac 1{n^2}<\infty.
$$
Since $u(0)\ge \eta_n(0)$ for every $n\ge1$,  by~Theorem~\ref{thm:comparison} we have
$u(t)\geq \eta_n(t)$
for every $n$, and every $t>0$. In particular,
$$
\|u(t_0)\|_{\infty} \ge \|\eta_n(t_0)\|_{\infty}=n.
$$
Letting $n\rightarrow\infty$, we obtain $\|u(t_0) \|_{\infty} =\infty$, and $t_0>0$ is arbitrary. In other words, there is no $L^{1} $--$L^\infty$ smoothing effect. This gives the desired counterexample by using 
Theorem~\ref{thm:Forward Mp smoothing effect}.
\end{proof}

\begin{remark}
    If $u$ is a weak energy solution, then $u^m(t)\in \dot{H}^{\frac\sigma2}(\mathbb{R}^N)$ for almost every $t>0$. Therefore, by Sobolev's embedding, $u(t)\in L^{\frac{2mN}{N-\sigma}}(\mathbb{R}^N)$. On the other hand, if $m\in(\frac{N-\sigma}{N+\sigma},m_c)$, then $\frac{2Nm}{N-\sigma}>p^*$ and hence $u(t)$ would be bounded, because of~\cite[Theorem 8.2]{b1}. Thus, our counterexample is a very weak solution, but not a weak energy solution, at least in this range.
\end{remark} 

\begin{remark} For the critical exponent $p=p^*>1$, the counterexample showing the lack of any smoothing starting from $\mathcal{M}^{p^*}$ is the explicit solution~\eqref{sol-extinction}. We do not know if there are smoothing effects (forward or backward) starting from  an initial datum in $L^{p^*}$. An argument similar to the one used in Theorem~\ref{failure of $L^p$--$L^q$}(iii), with $m<m_c$, does not work as a consequence of the property of finite-time extinction, which implies that the solutions for the small masses $u_n$ that we use in the  construction will vanish in times that go to zero with $n$, and we can not define the coefficients~$k_n$. This is not a coincidence: the same obstruction affects our other main tool, concentration comparison (Proposition~\ref{propo:concentrationcomparison}). Indeed, the worst-case solution in $\mathcal{M}^{p^*}$ against which every comparison in this range is made is precisely the explicit, extinguishing solution~\eqref{sol-extinction}, so the resulting bound necessarily degenerates as $t\to T^-$ and cannot distinguish a genuine smoothing effect from mere extinction. Settling the question for $L^{p^*}$ therefore seems to require an approach different from the ones developed here.
\end{remark}

\appendix

\section{Continuity in \texorpdfstring{$p^*$}{p*}}

We  prove the continuity in time in $L^p(\mathbb{R}^N)$, away from $t=0$, of $L^p$ solutions for exponents satisfying $p>p^*$. 

\begin{theorem}[Continuity in $L^p$]\label{Lp continuity}
    Let $u_0 \in L^p(\mathbb{R}^N) $ with $p > p^*$. Then the corresponding $L^p$ solution \( u \) to~\eqref{eq:main} satisfies $u \in C((0,\infty); L^p(\mathbb{R}^N))$.
\end{theorem}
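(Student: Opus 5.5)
The plan is to approximate $u_0$ by nice data, use uniform-in-$n$ control coming from the $L^p$ contraction and the forward smoothing effect, and conclude by a uniform-limit argument on compact time intervals $[\tau,T]$, $\tau>0$.

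\emph{Approximation.} Take $u_{0,n}\in L^1(\mathbb{R}^N)\cap L^\infty(\mathbb{R}^N)$, $0\le u_{0,n}\uparrow u_0$ with $u_{0,n}\to u_0$ in $L^p(\mathbb{R}^N)$. Since $u_{0,n}\in L^1\cap L^p$ with $p>p^*$, the theory of~\cite{b1} applies: the corresponding solutions $u_n$ are bounded weak (energy) solutions for positive times, and they lie in $C([0,\infty);L^1(\mathbb{R}^N))$. By Theorem~\ref{uniquenessthm} and Theorem~\ref{thm:comparison}, $u_n\uparrow u$, where $u$ is the unique $L^p$ solution of Theorem~\ref{solution in Lp}.

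\emph{Uniform control.} The key estimate is the $L^p$ contraction for the approximating solutions (Proposition 8.5 in~\cite{b1}, already used in Theorem~\ref{solution in Lp}): $\|(u_n-u_k)(t)\|_p\le \|u_{0,n}-u_{0,k}\|_p$ for all $t\ge0$. Thus $\{u_n\}$ is Cauchy in $L^\infty((0,\infty);L^p(\mathbb{R}^N))$, uniformly in $t$. If each $u_n$ lies in $C([\tau,T];L^p)$, the uniform limit $u$ does too, and since $\tau,T$ are arbitrary this proves the theorem. The proof therefore reduces to showing $u_n\in C((0,\infty);L^p)$ for nice data.

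\emph{Continuity of $u_n$.} By the smoothing effect in~\cite{b1} (or Theorem~\ref{thm:Forward Mp smoothing effect}), $u_n\in L^\infty(\mathbb{R}^N\times[\tau,\infty))$, with a bound depending on $n$, which is harmless here. Also $u_n\in C([0,\infty);L^1)$, since in~\cite{b1} these are semigroup solutions obtained via Crandall--Liggett in $L^1$. Interpolation then gives
\[
\|u_n(t)-u_n(s)\|_p\le \|u_n(t)-u_n(s)\|_\infty^{1-\frac1p}\|u_n(t)-u_n(s)\|_1^{\frac1p}\le (2M_{n,\tau})^{1-\frac1p}\|u_n(t)-u_n(s)\|_1^{\frac1p}\to0 .
\]
Here $p\ge1$, and $p>p^*$ guarantees the boundedness. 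So $u_n\in C([\tau,\infty);L^p)$.

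\emph{Main obstacle.} The delicate point is justifying that the $L^p$ contraction $\|(u_n-u_k)(t)\|_p\le\|(u_n-u_k)(0)\|_p$ holds for all $t$, not just almost every $t$, and between two different solutions rather than only for $\|u(t)\|_p$. I would try to obtain it from the $T$-accretivity of the operator $\varphi\mapsto(-\Delta)^{\sigma/2}\varphi^m$ in $L^1\cap L^\infty$, which yields that the resolvents are nonexpansive in every $L^p$ norm; this passes to the semigroup through the exponential formula. If that is unavailable, a fallback is weaker. The $L^1_{\rm loc}$ continuity from Definition~\ref{def1}, combined with the uniform $L^\infty$ bound from Theorem~\ref{thm:Forward Mp smoothing effect} on $[\tau,T]$ and tightness in $L^p$, gives strong continuity once the mass outside large balls is uniformly small. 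That tightness is exactly what the contraction against approximants provides, so the contraction step remains central either way.
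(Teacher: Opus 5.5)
Your key estimate $\|(u_n-u_k)(t)\|_p\le\|u_{0,n}-u_{0,k}\|_p$ is the gap, and everything else in the proof rests on it.

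\textbf{Why the contraction is not available.} Proposition~8.5 of \cite{b1}, as used in Theorem~\ref{solution in Lp}, only gives that $t\mapsto\|u(t)\|_p$ is nonincreasing for a \emph{single} solution. A contraction of \emph{differences} in $L^p$, $p>1$, is not a property of this equation.
\begin{itemize}
\item $T$-accretivity holds in $L^1$. Together with the invariance of constants, it gives order preservation and $\int(u(t)-k)_+\le\int(u_0-k)_+$, hence bounds on each $\|u(t)\|_p$. It says nothing about $\|u(t)-\hat u(t)\|_p$.
\item There is no $L^\infty$ contraction of differences to interpolate with. The usual argument needs $J_\lambda(f+c)=J_\lambda f+c$, which is false for $m\ne1$.
\item Already for $p=2$, write $u^m-\hat u^m=a(x)(u-\hat u)$ with a variable coefficient $a$. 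Then the pairing $\int(-\Delta)^{\frac\sigma2}(u^m-\hat u^m)\,(u-\hat u)$ has no sign. The monotone structure of such operators lives in $\dot H^{-\frac\sigma2}$, not in $L^p$.
\end{itemize}

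\textbf{A sanity check.} Since $u_{0,n}\in L^1\cap L^\infty$, you have $\|u_n(t)\|_\infty\le\|u_{0,n}\|_\infty$ for all $t\ge0$ with no smoothing effect at all. So your argument would give $u\in C([0,\infty);L^p)$ for every $p>1$ without ever using $p>p^*$. The paper explicitly lists continuity for $p\le p^*$ and continuity at $t=0$ as open problems.

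\textbf{Your fallback.} It has the right architecture. It is exactly the paper's splitting into $B_R$, handled by the $L^\infty$ bound for $t\ge\tau$ and $L^1_{\rm loc}$ continuity, and $B_R^c$. But you again take the tightness from the contraction, so the gap remains.

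\textbf{The missing idea.} You need a direct, uniform-in-time tail estimate that uses only the single-solution bound $\|u(s)\|_p\le\|u_0\|_p$. The paper does this as follows.
\begin{itemize}
\item Take $\phi_R(x)=\phi(x/R)$, with $\phi$ smooth, vanishing in $B_{1/2}$ and equal to $1$ outside $B_1$. Test the equation with $p\phi_R u^{p-1}$.
\item Expand $(-\Delta)^{\frac\sigma2}(u^{p-1}\phi_R)$ with the product formula. The two local terms combine through the Kato-type inequality $(-\Delta)^{\frac\sigma2}u^{m+p-1}\le\frac{m+p-1}{p-1}\,u^m(-\Delta)^{\frac\sigma2}u^{p-1}$.
\item The bilinear term is handled by Young's inequality $u^m(x)u^{p-1}(y)\le c\,(u^{m+p-1}(x)+u^{m+p-1}(y))$.
\item Everything is then bounded by integrals $\int u^{m+p-1}G_R$, with $G_R=|(-\Delta)^{\frac\sigma2}\phi_R|$ or $G_R(x)=\int|\phi_R(x)-\phi_R(y)|\,|x-y|^{-N-\sigma}\,dy$.
\item H\"older with exponents $\frac{p}{m+p-1}$ and $\frac{p}{1-m}$, plus scaling, gives
\[
\frac{d}{dt}\int_{\mathbb{R}^N}\phi_R\,u^p\le c\,\|u_0\|_p^{m+p-1}R^{-\sigma+\frac{N(1-m)}{p}}.
\]
\end{itemize}
The exponent of $R$ is negative exactly because $p>p^*$. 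Integrating from a time $\tau$ with $u(\tau)\in L^p$ makes $\int_{B_R^c}u^p(s)$ small uniformly for $s>\tau$. This is where the hypothesis $p>p^*$ really enters, beyond giving boundedness. Combined with your $B_R$ argument, it yields the theorem.
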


\begin{proof}
We will prove that for every $\varepsilon>0$ there exists $h>0$ small such that
$$
    \int_{\mathbb{R}^N}|u(x,t_1)-u(x,t_2)|^p\,dx< c(p)\varepsilon\qquad \textup{if }|t_1-t_2|<h,\ t_1,t_2>0,
$$
so in fact we have uniform continuity. We split the integral to be estimated as
$$
    \int_{\mathbb{R}^N}|u(x,t_1)-u(x,t_2)|^p\,dx=\underbrace{\int_{B_R}|u(x,t_1)-u(x,t_2)|^p\,dx}_{I_1}+\underbrace{\int_{B^c_R}|u(x,t_1)-u(x,t_2)|^p\,dx}_{I_2}
$$
for some $R>0$ to be chosen later. Take $0<\tau<\min\{t_1,t_2\}$ a time such that $u(\tau)\in L^p(\mathbb{R}^N)$. Thus there exists some large value $R_1=R_1(\varepsilon,\tau)$ such that 
$$
    \int_{B_R^c}u^p(x,\tau)\,dx<\varepsilon\quad\textup{for every }R>R_1.
$$
We claim that  there also exists $R_2=R_2(\varepsilon,\tau)>R_1$ such that
$$
    \int_{B^c_R}u^p(x,s)\,dx<2\varepsilon\quad\textup{for every }R>R_2\textup{ and almost every }s>\tau. 
$$
This would show that $I_2\le C_p \varepsilon$ for almost every $s>\tau$. 

To prove the claim, we estimate $\displaystyle\int_{\mathbb{R}^N}\phi_Ru^p(s)$, where $\phi_R(x)=\phi(x/R)$ and $\phi$ is a smooth, nonnegative, radially nonincreasing function such that
\[
    \phi(x)=
    \begin{cases}
        1, & |x|>1,\\
        0, & |x|<1/2.
    \end{cases}
\]
Multiplying the equation by $p\phi_R u^{p-1}(s)$, and using the formula for the fractional Laplacian of a product, we get 
$$    \partial_t\int_{\mathbb{R}^N}\phi_R(x)u^p(x,s)\,dx
    =-p\int_{\mathbb{R}^N} u^m(x,s)(-\Delta)^{\frac\sigma2}(u^{p-1}(x,s)\phi_R(x))\,dx=J^1+J^2,
$$
where
\begin{align*}
    J^1&
    =-p\left(\int_{\mathbb{R}^N} u^{m+p-1}(x,s)(-\Delta)^{\frac\sigma2}\phi_R(x)\,dx+\int_{\mathbb{R}^N}\phi_R(x)u^m(x,s)(-\Delta)^{\frac\sigma2}u^{p-1}(x,s)\,dx\right),
    \\
    J^2&=c\int_{\mathbb{R}^N}u^m(x,s)\mathcal{B}(u^{p-1},\phi_R)(x,s)\,dx,\quad
    \mathcal{B}(f,g)(x)=\int_{\mathbb{R}^N}\frac{(f(x)-f(y))(g(x)-g(y))}{|x-y|^{N+\sigma}}\,dy.
\end{align*}
Using Kato’s inequality,
$$
    (-\Delta)^{\frac\sigma2}u^{m+p-1}\le \frac{p+m-1}{p-1} u^m(-\Delta)^{\frac\sigma2}u^{p-1},
$$
we can combine the two integrals defining $J^1$ into just one, obtaining
\begin{align*}
    J^1&\le\int_{\mathbb{R}^N} u^{m+p-1}(x,s)|(-\Delta)^{\frac\sigma2}\phi_R(x)|\,dx 
    \le c\|u(s)\|_p^{m+p-1}\|(-\Delta)^{\frac\sigma2}\phi_R\|_{\frac p{1-m}}
    \\&
    \le c\|u_0\|_p^{m+p-1}R^{-\sigma+\frac{N(1-m)}{p}}\|(-\Delta)^{\frac\sigma2}\phi\|_{\frac p{1-m}}\le cR^{-\sigma+\frac{N(1-m)}{p}}.
\end{align*}
On the other hand,  we simplify the integral involving $\mathcal{B}$ using the estimate
$$
    u^m(x,s)u^{p-1}(y,s)\le c \left(u^{m+p-1}(x,s)+u^{m+p-1}(y,s)\right)
$$
which follows from Young’s inequality $a^p b^q \le \frac{p}{p+q}\, a^{p+q} + \frac{q}{p+q}\, b^{p+q}$, and symmetry. Thus,
\begin{align*}
J^2&\le c\int_{\mathbb{R}^N}u^m(x,s)\mathcal{B}(u^{p-1},\phi_R)(x,s)\,dx \\ &\le c
\int_{\mathbb{R}^N}u^{m+p-1}(x,s)\left(\int_{\mathbb{R}^N}\frac{|\phi_R(x)-\phi_R(y)|}{|x-y|^{N+\sigma}}\,dy\right)\,dx
\\&
 \leq cR^{-\sigma}\|u(t)\|_p^{m+p-1}\left(\int_{\mathbb{R}^N}\left( \int_{\mathbb{R}^N}\frac{|\phi_R(x)-\phi_R(y)|}{|x-y|^{N+\sigma}}\,dy\right)^{\frac{1-m}{p}}\,dx\right)^{\frac p{1-m}}
 \\&
\le  cR^{-\sigma+\frac{N(1-m)}{p}}\left(\int_{\mathbb{R}^N}\left( \int_{\mathbb{R}^N}\frac{|\phi(z)-\phi(w)|}{|z-w|^{N+\sigma}}\,dw\right)^{\frac{1-m}{p}}\,dz\right)^{\frac p{1-m}}
 \\&
\le  cR^{-\sigma+\frac{N(1-m)}{p}}.
\end{align*}
We therefore get,
$$
\partial_t\int_{\mathbb{R}^N}\phi_R(x)u^p(x,s)\,dx\le cR^{-\sigma+\frac{N(1-m)}{p}},
$$
which integrated over the time interval $(\tau,s)$  gives
$$
\int_{\mathbb{R}^N}\phi_R(x)u^p(x,s)\,dx\le \int_{\mathbb{R}^N}\phi_R(x)u^p(x,\tau)\,dx+cR^{-\sigma+\frac{N(1-m)}{p}}. 
$$
Since $p>p^*$, the exponent $-\sigma+\frac{N(1-m)}{p}$ is negative, and the claim follows if $R$ is big enough. 

Now we turn to $I_1$, which is easier to estimate. Since $p>p^*$, $u$ is bounded, and we get
$$
    I_1=\int_{B_R}|u(x,t_1)-u(x,t_2)|^p\,dx \le c(\tau,R)\int_{B_R}|u(x,t_1)-u(x,t_2)|\,dx<\varepsilon
$$
for any fixed $R$, if $|t_1-t_2|<h$ with $h$ small,  by the continuity in $L^1_{\text{loc}}$. 
\end{proof}

\begin{remark} Continuity in  $L^p$ with $1\le p\le p^*$ and in $\mathcal{M}^p$ with $p>1$ is open. If $p>p^*$, we could try to adapt the argument used to prove the continuity in $L^p$ in Theorem~\ref{Lp continuity} to deal with $\mathcal{M}^p$: one obtains essentially the same tail decay estimate as for $L^p$ solutions, up to an additional term depending on $R$. However,  one cannot choose $R$ in such a way that the tails become small. As a counterexample, consider again $U_0(x)=|x|^{-\frac{N}{p}}$. Continuity up to $t=0$ is also an open problem in spaces of both types.
\end{remark}

%%%%%%%%%%%%%%%%%%%%%%%%%%%%%%%%%%%%
\section*{Acknowledgements}
The three authors were supported by MICIU/AEI/10.13039/501100011033 (Spain), through  grants PID2023-146931NB-I00, RED2024-153842-T and CEX2023-001347-S.

We thank Jorge Ruiz-Cases for some fruitful discussions.
%%%%%%%%%%%%%%%%%%%%%%%%%%%%%%%%%%%%%%%%%%

\end{document}